\newcommand{\qed}{$\Box$}
\newenvironment{@abssec}[1]{%
    \if@twocolumn

      \section*{#1}%
    \else

      \vspace{.05in}\footnotesize
      \parindent .2in
 {\upshape\bfseries #1. }\ignorespaces
    \fi}
\par\vspace{.1in}\fi}
\newenvironment{keywords}{\begin{@abssec}{\keywordsname}}{\end{@abssec}}
\newenvironment{AMS}{\begin{@abssec}{\AMSname}}{\end{@abssec}}
\newcommand\keywordsname{Key words}
\newcommand\AMSname{AMS subject classifications}
\newcommand\AMname{AMS subject classification}
\newtheorem{theorem}{Theorem}
 \newtheorem{lemma}[theorem]{Lemma}
\def\qed{\vbox{\hrule height0.6pt\hbox{%
  \vrule height1.3ex width0.6pt\hskip0.8ex
  \vrule width0.6pt}\hrule height0.6pt
 }}
\def\theequation{\arabic{section}.\arabic{equation}}
 \def\thetheorem{\arabic{section}.\arabic{theorem}}
\def\theequation{\arabic{section}.\arabic{equation}}
 \def\thetheorem{\arabic{section}.\arabic{theorem}}
\def\pa{\partial}
\def\Om{\Omega}
\def\dist{\mbox{\rm dist}}
\title{Matzoh ball soup revisited: \\ the boundary regularity issue
\thanks{This research was partially supported by a Grant-in-Aid
for Scientific Research (B) ($\sharp$ 20340031) of
Japan Society for the Promotion of Science and by a
Grant of the Ital\-ian MURST.}}
\author{Rolando Magnanini\thanks{Dipartimento di Matematica U.~Dini,
Universit\` a di Firenze, viale Morgagni 67/A, 50134 Firenze, Italy.
({\tt magnanin@math.unifi.it}).} 
 and Shigeru Sakaguchi\thanks{Department of Applied Mathematics,
Graduate School of  Engineering, Hiroshima
University, Higashi-Hiroshima, 739-8527,  Japan.
({\tt sakaguch@amath.hiroshima-u.ac.jp}).}}
\begin{document}

\maketitle

\begin{abstract}
We consider  nonlinear diffusion equations of the form  $\partial_t u= \Delta \phi(u)$ in $\mathbb R^N$ with $N \ge 2.$ When $\phi(s) \equiv s$, this is just the heat equation. Let  $\Omega$ be a domain in $\mathbb R^N$, where $\partial\Omega$ is bounded and $\partial\Omega = \partial\left(\mathbb R^N\setminus \overline{\Omega}\right)$.  We consider the initial-boundary value problem, where the initial value equals zero and the boundary value equals $1$,  and
the Cauchy problem where the initial data is  
the characteristic function of the set $\Omega^c = \mathbb R^N\setminus \Omega$. 
We settle the boundary regularity issue for the characterization of the sphere as a stationary level surface of the solution $u:$ no regularity assumption is needed for $\partial\Omega.$
\end{abstract}


\begin{keywords}
nonlinear diffusion, heat equation, initial-boundary value problem, Cauchy problem,
initial behavior, stationary level surface, sphere.
\end{keywords}

\begin{AMS}
Primary 35K05, 35K55, 35B06; Secondary  35K15, 35K20. 
\end{AMS}

\pagestyle{plain}
\thispagestyle{plain}
\markboth{R. MAGNANINI AND S. SAKAGUCHI}{Matzoh ball soup revisited}

\pagestyle{plain}
\thispagestyle{plain}

\section{Introduction}
Let $\Omega$ be a domain in $\mathbb R^N\ (N \ge 2)$.  Let $\phi : \mathbb R \to \mathbb R$ satisfy
\begin{equation}
\label{nonlinearity}
\phi \in C^2(\mathbb R), \quad \phi(0) = 0, \ \mbox{ and }\ 0 < \delta_1 \le \phi^\prime(s) \le \delta_2\ \mbox{ for } s \in \mathbb R,
\end{equation}
 where $\delta_1, \delta_2$ are positive constants. Consider the unique bounded solution $u = u(x,t)$ of either
  the  initial-boundary value problem: 
\begin{eqnarray}
&\partial_t u=\Delta \phi(u)\ \ &\mbox{in }\ \Om\times (0,+\infty),\label{diffusion}\\
&u=1\ \ &\mbox{on }\ \partial\Omega\times (0,+\infty),\label{dirichlet}\\
&u=0\ \ &\mbox{on }\ \Om\times \{0\},\label{initial}
\end{eqnarray}
or the Cauchy  problem:
\begin{equation}
\label{cauchy}
\partial_t u=\Delta \phi(u)\ \mbox{ in }\ \mathbb R^N \times (0, +\infty)\quad\mbox{ and }\ u = \chi_{\Omega^c}\ \mbox{ on }\ \mathbb R^N \times \{0\};
\end{equation}
here $\chi_{\Omega^c}$ denotes the characteristic function of the set $\Omega^c = \mathbb R^N \setminus \Omega$. 
(As a solution $u$ of problem \eqref{diffusion}-\eqref{initial} we mean a classical solution 
belonging to $C^{2,1}(\Omega\times(0,+\infty)) \cap L^\infty(\Omega \times (0,+\infty))\cap C^0(\overline{\Omega} \times (0,+\infty))$ and such that $u(\cdot,t) \to 0$ in $L^1_{loc}(\Omega)$ as $t \to 0;$ 
similarly, a solution of  \eqref{cauchy} is a classical solution belonging to $C^{2,1}(\mathbb R^N \times (0,+\infty))\cap L^\infty(\mathbb R^N \times (0,+\infty))$ and such that $u(\cdot,t) \to \chi_{\Omega^c}(\cdot)$ in $L^1_{loc}(\mathbb R^N)$ as $t 
\to 0$.
Note that the uniqueness of the solution of either problem \eqref{diffusion}-\eqref{initial} or  problem \eqref{cauchy} follows from the 
comparison principle, as shown in \cite[Theorem A.1]{MSjde2nd}.)
 
In Theorem \ref{th:nonlinear-Varadhan} below, for the reader's convenience, we recall
a nonlinear version of an asymptotic formula --- due to Varadhan \cite{Va} for the linear case --- that was proved in
\cite[Theorem 1.1 and Theorem 4.1]{MSpoincare} and \cite[Theorem 2.1 and Remark 2.2]{MSjde2nd}.
To this aim, we
define $\Phi: (0,\infty) \to \mathbb R$ by
\begin{equation}
\label{press}
\Phi(s) = \int_1^s\frac {\phi^\prime(\xi)}{\xi} d\xi\qquad(s >0),
\end{equation}
(note that if $\phi(s) \equiv s$, then $\Phi(s) = \log s$)
and let $d = d(x)$ be the distance function given by
\begin{equation}
\label{distance function to the boundary}
d(x) = \dist(x,\partial\Omega)\ \mbox{ for } x \in \Omega.
\end{equation}


\renewcommand{\thetheorem}{A}

\begin{theorem} {\rm ({\rm \cite{MSpoincare, MSjde2nd}})}
\label{th:nonlinear-Varadhan}
Let $u$ be the solution of 
either problem \eqref{diffusion}-\eqref{initial} or  problem \eqref{cauchy}. Under the assumption that  $\partial\Omega = \partial\left(\mathbb R^N\setminus \overline{\Omega}\right)$, 
$$
-4t\Phi(u(x,t)) \to d(x)^2 \ \mbox{ as } t \to 0^+ \mbox{ uniformly on every compact set in }\Omega.
$$
\end{theorem}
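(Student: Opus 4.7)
The plan is to prove Theorem~A by sandwiching $-4t\Phi(u(x,t))$ between the values attained by explicit self-similar barriers on half-spaces; the special form of the pressure $\Phi$ then absorbs the nonlinearity of $\phi$ into a universal quadratic profile.

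The building block is the one-dimensional self-similar solution. Seeking $v(x,t)=W(x_N/\sqrt t)$ to solve $\partial_t v=\Delta\phi(v)$ on the half-space $\{x_N>0\}$ with $v|_{x_N=0}=1$ and $v(\cdot,0)=0$ reduces the PDE to the ODE $(\phi(W))''(\xi)+\tfrac{\xi}{2}W'(\xi)=0$ with $W(0)=1$ and $W(+\infty)=0$; a parallel reduction applies in the Cauchy setting with $W:\mathbb R\to[0,1]$, $W(-\infty)=1$, $W(+\infty)=0$. Setting $P=\Phi(W)$ and using $\phi'(W)\,W'=W\,P'$ turns this ODE into an equation for $P$ whose leading-order analysis as $\xi\to+\infty$ (where $W\to 0$ and $\phi'(W)\to\phi'(0)>0$) yields $P(\xi)\sim -\xi^2/4$. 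Consequently $-4t\,\Phi(v(x,t))\to x_N^2$ as $t\to 0^+$, uniformly on compact subsets of $\{x_N>0\}$: this is Theorem~A in the model case and the main technical step of the entire argument. Here the definition $\Phi(s)=\int_1^s\phi'(\xi)/\xi\,d\xi$ is essential: it is engineered so that the factor $\phi'(0)$ that would otherwise appear in front of the quadratic leading term is exactly absorbed, leaving the clean limit $x_N^2$.

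The transfer from the half-space to a general domain $\Omega$ is by comparison. Fix a compact set $K\subset\Omega$, take $x_0\in K$, and let $y_0\in\partial\Omega$ realize the distance $d(x_0)$. For the lower bound on $-4t\Phi(u(x_0,t))$, the maximal ball $B_{d(x_0)}(x_0)$ lies in $\Omega$ and touches $\partial\Omega$ internally at $y_0$; I place a half-space barrier $v_*$ (of the boundary-value type for the IBV problem, or the Cauchy type for the Cauchy problem) along the supporting hyperplane at $y_0$, so that $v_*\ge u$ on the parabolic boundary of $B_{d(x_0)}(x_0)\times(0,\tau)$ for small $\tau$, and apply the comparison principle of \cite[Theorem A.1]{MSjde2nd}. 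For the upper bound, the hypothesis $\partial\Omega=\partial(\mathbb R^N\setminus\overline\Omega)$ is invoked: given $\eta>0$, pick $z\in\mathbb R^N\setminus\overline\Omega$ with $|z-y_0|<\eta$, pass a hyperplane through $z$, and slide an opposite-oriented half-space barrier $v$ underneath $u$ on a suitable region containing $x_0$. This yields $\limsup_{t\to 0^+}[-4t\Phi(u(x_0,t))]\le(d(x_0)+\eta)^2$; letting $\eta\to 0$ and exploiting compactness of $K$ gives the uniform limit $d(x)^2$.

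The main obstacle is the ODE asymptotic of the first step; the comparison step is otherwise routine once that is available. The hypothesis $\partial\Omega=\partial(\mathbb R^N\setminus\overline\Omega)$ is used only for accessibility of each boundary point from the exterior, and so imposes no further smoothness on $\partial\Omega$, which is precisely the boundary regularity issue addressed by the paper.
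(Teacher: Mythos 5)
The paper does not actually prove Theorem~A: it is quoted verbatim from \cite{MSpoincare, MSjde2nd} ``for the reader's convenience,'' so there is no in-document proof to compare against. The closest material in this paper is the Appendix, which proves Theorem~B for the linear heat equation; there the barriers are built from the \emph{signed distance function} $d^*$, namely $v_\pm(x,t)=F_\pm\bigl(t^{-1/2}d^*(x)\bigr)$, corrected by small additive terms $\pm 2E_1 e^{-E_2/t}$. That construction crucially uses $C^2$ regularity of $\partial\Omega$ (so that $\Delta d^*$ is bounded on $\mathcal N$), which Theorem~A does not assume; hence the actual proof of Theorem~A in the cited papers must use a different comparison geometry than the Appendix, and also than yours.

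The genuine gap in your proposal is the comparison step, not the ODE asymptotics (which you correctly flag as the technical core, and which does formally produce $\Phi(W(\xi))\sim -\xi^2/4$ once existence and monotonicity of $W$ are granted). For the lower bound of $-4t\Phi(u(x_0,t))$ you compare $u$ with the half-space barrier $v_*$ on the cylinder $B_{d(x_0)}(x_0)\times(0,\tau)$ and assert $v_*\ge u$ on its parabolic boundary. That inequality is not justified and is false in general: on $\partial B_{d(x_0)}(x_0)$ the barrier $v_*(x,t)=W(x_N/\sqrt t)$ decays according to the height $x_N$ above the tangent hyperplane at $y_0$, which is only quadratically small in the distance to $y_0$; but nothing prevents $\partial\Omega$ from protruding into the half-space $\{x_N>0\}$ near $y_0$ (it is merely disjoint from $B_{d(x_0)}(x_0)$), so that $u$ can exceed $v_*$ on a whole arc of $\partial B_{d(x_0)}(x_0)$. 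Comparing on $\Omega\times(0,\tau)$ instead fails for the opposite reason: on $\partial\Omega\cap\{x_N>0\}$ one has $u=1>v_*$. Your description of the upper bound (``slide an opposite-oriented half-space barrier underneath $u$ on a suitable region containing $x_0$'') has the mirror-image difficulty and is left entirely unspecified: $\Omega$ does not lie in any half-space, so the lateral-boundary ordering is again not available on any natural region. To close these gaps one must either localize the comparison near $y_0$ and control $u$ on the remaining lateral boundary by a separate exponential estimate (as the Appendix does via Lemma~B.2 and the corrector $\pm2E_1e^{-E_2/t}$), or use radially symmetric barriers (balls inside $\Omega$ for the inner bound, complements of small balls around exterior points $z$ for the outer bound), which is the comparison geometry that actually exploits $\partial\Omega=\partial(\mathbb R^N\setminus\overline\Omega)$ and needs no regularity. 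As written, the transfer from the half-space model to $\Omega$ does not go through.
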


\renewcommand{\thetheorem}{\arabic{section}.\arabic{theorem}}
 \setcounter{theorem}{0}
The assumption that $\partial\Omega = \partial\left(\mathbb R^N\setminus \overline{\Omega}\right)$ is general. For example,  it holds  for Lipschitz domains.

A conjecture, posed by Klamkin \cite{Kl} and referred to by Zalcman \cite{Z} as the Matzoh ball soup, was settled affirmatively by Alessandrini \cite{A1, A2}. In \cite{A2}, when $\phi(s) \equiv s$ and $\Omega$ is bounded, under the assumption that every point of $\partial\Omega$ is regular with respect to the Laplacian, it was proved that if all the spatial level surfaces of the solution $u$ of problem \eqref{diffusion}-\eqref{initial} are invariant with time then $\Omega$ must be a ball.  The proof requires assuming that {\it infinitely many} level surfaces of $u$ are invariant with time. Here, we remark that the values of $u$ vary with time on its spatial level surfaces.

In \cite{MSannals, MSindiana, MSpoincare}, 
we proved symmetry results for
solutions of either problem \eqref{diffusion}-\eqref{initial} or problem \eqref{cauchy} 
which admit a time-invariant level surface. Those results were obtained 
under classical regularity assumptions on the domains at stake. 
In the present paper, with the aid of Theorem \ref{th:nonlinear-Varadhan}
and Theorem \ref{th:interaction curvatures} below,
we show that such results also hold under very general assumptions.
\par
The following theorem removes the hypotheses made in \cite[Theorem 1.2 and Theorem 1.3]{MSpoincare}
that $\partial\Omega$ and $\partial D$ be $C^2$-smooth.


\begin{theorem}
\label{th:nonlinear Matzoh revisited} Let $u$ be the unique bounded solution of 
either problem \eqref{diffusion}-\eqref{initial} or problem \eqref{cauchy}. 
Suppose that $\partial\Omega$ is bounded and $\partial\Omega = \pa\left(\mathbb R^N\setminus \overline{\Omega}\right)$. 
\par
Let $D$ be a $C^1$ domain with bounded boundary $\partial D$ satisfying  $\overline D \subset \Omega$. 
Then the following statements hold.
\begin{enumerate}[\rm (I)]
\item 
If there exists a function $a: (0,+\infty) \to (0,+\infty)$ satisfying
\begin{equation}
\label{stationary level 1}
u(x,t) = a(t)\ \mbox{ for every } (x,t) \in \partial D \times (0, +\infty),
\end{equation}
 then $\partial\Omega$ must be a sphere.
\item
If $D$ is unbounded and for each connected component $\Gamma$ of $\partial D$ there exists a function $a_\Gamma: (0,+\infty) \to (0,+\infty)$ satisfying
\begin{equation}
\label{stationary level 2}
u(x,t) = a_\Gamma(t)\ \mbox{ for every } (x,t) \in \Gamma \times (0, +\infty), 
\end{equation}
then $\partial\Omega$ must be a sphere.
\end{enumerate}
\end{theorem}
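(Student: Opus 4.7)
My plan is to reduce the PDE hypothesis to a purely geometric condition via Theorem~\ref{th:nonlinear-Varadhan}, and then to invoke the auxiliary Theorem~\ref{th:interaction curvatures} to conclude spherical symmetry of $\partial\Omega$.

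\textbf{Reducing to a distance identity.}
For part~(I), $\partial D$ is bounded by assumption, closed in $\mathbb{R}^N$ as a topological boundary, and satisfies $\partial D\subset\overline{D}\subset\Omega$; hence $\partial D$ is a compact subset of the open set~$\Omega$, and Theorem~\ref{th:nonlinear-Varadhan} applies uniformly on~$\partial D$. Since $u(x,t)=a(t)$ for $x\in\partial D$, the asymptotic identity $-4t\,\Phi(u(x,t))\to d(x)^2$ has a left-hand side independent of $x$, so passing to the limit $t\to 0^+$ forces $d(x)$ to be independent of~$x\in\partial D$: there exists $R>0$ such that $d(x)=R$ for every $x\in\partial D$. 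In part~(II), each connected component $\Gamma$ of $\partial D$ is likewise compact and contained in $\Omega$, and the same argument applied on~$\Gamma$ yields a (possibly $\Gamma$-dependent) constant $R_\Gamma>0$ with $d\equiv R_\Gamma$ on~$\Gamma$.

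\textbf{Geometric rigidity.}
The identity $d\equiv R$ on $\partial D$ means that each $x\in\partial D$ carries an interior open ball $B(x,R)\subset\Omega$ whose closure touches $\partial\Omega$; equivalently, $\partial D$ is a parallel surface to $\partial\Omega$ at distance~$R$. This is precisely the configuration in which Theorem~\ref{th:interaction curvatures} is designed to operate: starting from the $C^1$ hypersurface $\partial D$ together with its uniform family of interior touching balls, that theorem should force $\partial\Omega$ to be a sphere. For part~(II), the same rigidity is applied componentwise, and the hypothesis that $D$ is unbounded, combined with the boundedness of $\partial\Omega$, is used to rule out inconsistent configurations of several parallel components and to pin down a single sphere.

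\textbf{Main obstacle.}
Compared with \cite{MSpoincare}, where $\partial\Omega$ was assumed $C^2$-smooth, the new difficulty is the complete absence of a~priori regularity of $\partial\Omega$: the only structural hypothesis is the topological condition $\partial\Omega=\partial(\mathbb{R}^N\setminus\overline{\Omega})$. Consequently the classical second-order mean-curvature identities between a surface and its parallel surface are unavailable, and the whole regularity burden is transferred to Theorem~\ref{th:interaction curvatures}. Establishing that theorem — showing that the uniform interior-ball configuration produced in the first step is by itself enough to determine $\partial\Omega$ as a sphere — is where the real work lies; once it is in hand, the present theorem follows by the reduction above.
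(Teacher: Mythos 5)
Your first step—compactness of $\partial D$ (or of each component $\Gamma$) inside $\Omega$, Theorem~\ref{th:nonlinear-Varadhan} applied uniformly, and the conclusion that $d\equiv R$ on $\partial D$—is exactly how the paper begins, and it is correct.

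However, the second half of your plan goes in the wrong direction. Theorem~\ref{th:interaction curvatures} does not say, and cannot be made to say, that ``a uniform family of interior touching balls forces $\partial\Omega$ to be a sphere.'' That theorem is a purely \emph{asymptotic} statement: it computes the small-time limit of $t^{-(N+1)/4}\int_{B_R(x_0)}u\,dx$ in terms of the principal curvatures of $\partial\Omega$ at a contact point $y_0$, and it already presupposes that $\partial\Omega$ is $C^2$ near $y_0$. It is the main tool for the \emph{other} theorem, Theorem~\ref{th:Matzoh revisited}, where a balance law plus analyticity turn the curvature formula into a constant--Monge--Amp\`ere equation on a boundary component. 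It plays no role in the proof of Theorem~\ref{th:nonlinear Matzoh revisited}, and invoking it here would in fact reintroduce exactly the boundary regularity hypothesis the theorem is meant to remove.

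The missing idea is the \emph{method of moving planes applied directly to $D$}, not to $\Omega$. The actual argument strengthens ``$d\equiv R$ on $\partial D$'' to the full geometric identification
$\Omega = D\cup\bigcup_{x\in\partial D}B_R(x)=\{y:\mathrm{dist}(y,\overline D)<R\}$,
using the $C^1$ regularity of $\partial D$ to single out the unique contact point $y(x)=x+R\nu_{\partial D}(x)$ and to show that the set $\{\mathrm{dist}(\cdot,\overline D)=R\}$ coincides with $\partial\Omega$. This identification yields the key transfer lemma: if the reflection $D_\lambda'$ of $D$ in a hyperplane $\pi_\lambda$ satisfies $D_\lambda'\subset D$, then $\Omega_\lambda'\subset\Omega$, because reflection commutes with taking the $R$-neighborhood. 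One then runs moving planes on the smooth domain $D$ (admissible since $D$ is $C^1$, cf.\ Fraenkel), concludes that $D$ is a ball, and the identification of $\Omega$ as an $R$-neighborhood of $\overline D$ gives that $\partial\Omega$ is a sphere. Part~(II) runs the same way on the $C^1$ domain $\mathbb{R}^N\setminus\overline D$. Your outline stops short of this: after the distance identity you do not have a mechanism that turns the parallel-surface condition into a symmetry of $\partial\Omega$ without assuming smoothness of $\partial\Omega$.
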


The next theorem concerns results obtained in \cite{MSannals, MSindiana, MSpoincare}, and in particular \cite[Theorem 2.1]{MSpoincare}; we prove that they hold for a general domain $\Omega,$ without assuming the exterior sphere condition on $\Omega$.


\begin{theorem} 
\label{th:Matzoh revisited}  Let $\phi(s)\equiv s$ and let $u$ be the unique bounded solution of 
either problem \eqref{diffusion}-\eqref{initial} or problem \eqref{cauchy}.  Suppose that $\partial\Omega$ is bounded 
and $\partial\Omega = \partial\left(\mathbb R^N\setminus \overline{\Omega}\right)$. 
\par
Let $D$ be a domain with bounded boundary $\partial D$ satisfying  $\overline D \subset \Omega$, and let $\Gamma$ be a connected component of  $\partial D$ satisfying 
\begin{equation}
\label{nearest component}
\dist(\Gamma, \partial\Omega) = \mbox{\rm dist}(\partial D, \partial\Omega).
\end{equation}
 Suppose that  $D$ satisfies the interior cone condition on $\Gamma$. 
\par
If there exists a function $a: (0,+\infty) \to (0,+\infty)$ satisfying
\begin{equation}
\label{stationary level 3}
u(x,t) = a(t)\ \mbox{ for every } (x,t) \in \Gamma \times (0, +\infty),
\end{equation}
then $\partial\Omega$ must be either a sphere or the union of two concentric spheres.
\end{theorem}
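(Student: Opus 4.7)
The plan is to extract from Theorem~\ref{th:nonlinear-Varadhan} the geometric consequence that $\Gamma$ is equidistant from $\partial\Omega$, and then to run a geometric propagation argument in the spirit of \cite{MSpoincare}, using the interior cone condition on $\Gamma$ to compensate for the absence of any regularity hypothesis on $\partial\Omega$.

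For the analytic step, $\Gamma$ is a compact subset of $\Omega$ (a component of the bounded set $\partial D$ with $\overline D\subset\Omega$), so Theorem~\ref{th:nonlinear-Varadhan} applies uniformly on $\Gamma$. In the linear case $\phi(s)=s$ one has $\Phi(s)=\log s$, and by \eqref{stationary level 3} the quantity $u(x,t)=a(t)$ does not depend on $x\in\Gamma$; hence
\[
d(x)^2=\lim_{t\to 0^+}\bigl(-4t\log a(t)\bigr)\qquad\text{for every }x\in\Gamma.
\]
Thus $d$ is constant on $\Gamma$, and by \eqref{nearest component} that constant equals $R:=\dist(\partial D,\partial\Omega)>0$.

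From $d\equiv R$ on $\Gamma$ one gets immediately that $B_R(x)\subset\Omega$ with $\partial B_R(x)\cap\partial\Omega\neq\emptyset$ for every $x\in\Gamma$, and also that $d\ge R$ on $\overline D$ (any segment from a point of $D$ to $\partial\Omega$ must cross $\partial D$, whose points are at distance at least $R$ from $\partial\Omega$). I would then show that $\partial\Omega$ lies entirely on the outer parallel surface of $\Gamma$ at distance $R$, that is, $\dist(y,\Gamma)=R$ for every $y\in\partial\Omega$, by a connectedness argument: the subset of $\partial\Omega$ where this identity holds is non-empty, and one verifies it to be both open and closed. Here the interior cone condition on $\Gamma$ replaces the regularity assumption on $\partial\Omega$ used previously: at each $x_0\in\Gamma$ the cone in $\overline D$ at $x_0$ produces a solid wedge of nearby points of $\Gamma$ along which the balls $B_R(x)$ can be continuously slid while keeping $B_R(x)\subset\Omega$ and $\partial B_R(x)\cap\partial\Omega\neq\emptyset$, thereby propagating the tangency. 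Once $\partial\Omega$ is recognized as the $R$-outer parallel of $\Gamma$, the rigidity argument of \cite{MSpoincare}, combined with the boundedness of $\partial\Omega$ and the connectedness of $\Omega$, forces the stated dichotomy: $\partial\Omega$ is either a single sphere (when $\Omega$ is a ball or the exterior of a ball) or the union of two concentric spheres (when $\Omega$ is an annular region with $\Gamma$ as its middle concentric sphere).

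The main obstacle, as I see it, is the propagation step. Without any regularity of $\partial\Omega$ and only the interior cone condition on $\Gamma$, one cannot work with normals or smooth parametrizations; the sliding of tangent balls has to be carried out relying on the cone structure alone, together with the nearest-component assumption \eqref{nearest component}, which is exactly what prevents parts of $\partial\Omega$ from escaping beyond distance $R$ from $\Gamma$. Carefully adapting the geometric reasoning of \cite{MSpoincare} to this reduced regularity setting is where the core of the work lies.
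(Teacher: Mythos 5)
There is a genuine gap, and it is precisely at the point you flag as "the main obstacle": the geometric propagation argument you propose is not what makes the theorem go through, and it cannot be patched in the way you describe.

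The interior cone condition on $\Gamma$ is not used geometrically in the actual argument — there is no sliding of tangent balls. It is used \emph{analytically}: the paper invokes a balance law for stationary critical points of caloric functions (if $\nabla u(x_0,t)=0$ for all $t>0$ then $\int_{\partial B_r(x_0)}(x-x_0)\,u\,dS_x=0$ for all admissible $r,t$), and the cone at $x_0\in\Gamma$, combined with the Varadhan asymptotics of Theorem~\ref{th:nonlinear-Varadhan}, produces a positive lower bound on this moment integral for small $t$, a contradiction. This is what shows $\nabla u\neq 0$ on $\Gamma$, and hence (via analyticity of $u$ in $x$ and the implicit function theorem) that $\Gamma$ is a \emph{real analytic} hypersurface. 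Your proposal never establishes any regularity of $\Gamma$ beyond what is assumed, yet without it the subsequent curvature analysis cannot even be set up.

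The second, and decisive, missing idea is the derivation of the Monge--Amp\`ere-type identity $\prod_{j=1}^{N-1}\bigl(1/R-\kappa_j(\xi)\bigr)=c$ on a connected component $\gamma$ of $\partial\Omega$ parallel to $\Gamma$. In the paper this comes from Theorem~\ref{th:interaction curvatures} (the curvature asymptotics for $t^{-(N+1)/4}\int_{B_R(x_0)}u\,dx$) combined with a \emph{second} balance law — for stationary zeros of caloric functions — applied to $v(x,t)=u(x+p,t)-u(x+q,t)$, which by \eqref{stationary level 3} vanishes at the origin for all $t$; the resulting equality of $\int_{B_R(p)}u$ and $\int_{B_R(q)}u$ plus the asymptotics forces the curvature product to be constant locally, and analyticity of $\gamma$ extends it globally. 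This constraint is exactly what the rigidity argument of \cite{MSpoincare} feeds on; invoking that rigidity result without first having the Monge--Amp\`ere identity leaves the conclusion unsupported. Finally, the intermediate claim you aim for — that $\dist(y,\Gamma)=R$ for \emph{every} $y\in\partial\Omega$ — is stronger than what is actually proved (only one component $\gamma$ is the $R$-parallel of $\Gamma$; other components are controlled by the rigidity step, not by direct tangency propagation), and the open-closed argument you sketch has no mechanism, absent the balance laws, to rule out pieces of $\partial\Omega$ sitting at distance strictly greater than $R$ from $\Gamma$ over a neighborhood.
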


We sketch the main features of the proof of item (I) of Theorem \ref{th:nonlinear Matzoh revisited}.  By Theorem \ref{th:nonlinear-Varadhan} there exists a number  $R > 0$ such that $d(x) = R\ \mbox{ for all } x \in \partial D$, and hence, since $\partial D$ is of class $C^1$, we can conclude that $\Omega$ is the union
of $D$ and all the open balls $B_R(x)$ of radius $R$ centered at points $x\in\partial D.$ 
Thanks to this remark, we can apply the method of moving planes directly to either $D$ or $\mathbb R^N \setminus \overline{D}$ (in \cite[Theorem 1.2 and Theorem 1.3]{MSpoincare} we applied it to either $\Omega$ or $\mathbb R^N \setminus \overline{\Omega},$ instead); for this reason, we do not need the smoothness of $\partial\Omega$. 
The proof of item (II) of Theorem \ref{th:nonlinear Matzoh revisited} runs similarly.  

Eventually, Theorem \ref{th:nonlinear Matzoh revisited} is proved by the 
method of moving planes, and hence the following problem is open: When $D$ is bounded with disconnected boundary and for each connected component $\Gamma$ of $\partial D$ there exists a function $a_\Gamma : (0,+\infty) \to (0,+\infty)$ satisfying \eqref{stationary level 2}, must $\partial\Omega$ be a sphere?
Of course, it is assumed that $a_\Gamma$'s are different for at least two components.
\par
The removal of the exterior sphere condition on $\Omega$ in Theorem \ref{th:Matzoh revisited} relies on
\cite[Theorem 1.1 and Remark 1.2]{MSjde2nd}, that we summarize in 
Theorem \ref{th:interaction curvatures} for later use. 


\renewcommand{\thetheorem}{B}

\begin{theorem} {\rm (\cite[Theorem 1.1 and Remark 1.2]{MSjde2nd})}
\label{th:interaction curvatures} 
Let $x_0 \in \Omega$ and assume that the open ball $B_R(x_0)$  
is contained in  $\Omega$ and such that $\overline{B_R(x_0)} \cap \partial\Omega = \{ y_0 \}$ for some $y_0 \in \partial\Omega$.
Suppose that $\partial\Omega$ is of class $C^2$ in a neighborhood of the point $y_0$.
\par
Let $u$ be the solution of 
either problem \eqref{diffusion}-\eqref{initial} or problem \eqref{cauchy}. Then we have:
\begin{equation}
\label{asymptotics and curvatures}
\lim_{t\to 0^+}t^{-\frac{N+1}4 }\!\!\!\int\limits_{B_R(x_0)}\! u(x,t)\ dx=
c(\phi,N)\left\{\prod\limits_{j=1}^{N-1}\left(\frac 1R - \kappa_j(y_0)\right)\right\}^{-\frac 12}.
\end{equation}
Here, 
$\kappa_1(y_0),\dots,\kappa_{N-1}(y_0)$ denote the principal curvatures of $\partial\Omega$ at $y_0$ with 
respect to the inward normal direction to $\partial\Omega$  
and $c(\phi,N)$ is a positive constant depending only on $\phi$ and $N$ 
--- of course,  $c(\phi,N)$ depends on the problems  \eqref{diffusion}-\eqref{initial}  or \eqref{cauchy}. 
\par
When $\kappa_j(y_0) = \frac 1R$ for some $j \in \{ 1, \cdots, N-1\}$, 
\eqref{asymptotics and curvatures} holds by setting the right-hand side to $+\infty$ (notice that 
$\kappa_j(y_0) \le 1/R$ always holds
for all $j$'s).
\end{theorem}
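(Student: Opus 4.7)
The plan is to exploit Theorem \ref{th:nonlinear-Varadhan}, according to which $\Phi(u(x,t))\sim -d(x)^2/(4t)$ as $t\to 0^+$, so that $u(x,t)$ decays Gaussianly in $d(x)/\sqrt t$ and the mass of $u$ inside $B_R(x_0)$ concentrates at the unique contact point $y_0$. After a parabolic rescaling centered at $y_0$, the integral on the left of \eqref{asymptotics and curvatures} should reduce to a universal Gaussian integral over a paraboloidal region whose opening is governed by the differences $\tfrac1R-\kappa_j(y_0)$.

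First I would localize: pick a small neighborhood $U$ of $y_0$ and show that $\int_{B_R(x_0)\setminus U} u(x,t)\,dx$ is exponentially small in $1/t$ and hence negligible compared to $t^{(N+1)/4}$, because $d(x)\geq c>0$ there. Inside $U$ set up normal coordinates $(x',x_N)$ at $y_0$ with $x_N$ along the inner normal to $\partial\Omega$. By the $C^2$ regularity hypothesis,
\[
\partial\Omega\cap U:\ \ x_N=\tfrac12\sum_{j=1}^{N-1}\kappa_j(y_0)\,x_j^2+o(|x'|^2),
\]
while $\partial B_R(x_0)$ is the graph $x_N=\tfrac{|x'|^2}{2R}+O(|x'|^4)$. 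For $x\in B_R(x_0)$ close to $y_0$ the nearest point of $\partial\Omega$ lies in the same chart, and a standard computation yields $d(x)=x_N-\tfrac12\sum_j\kappa_j(y_0)\,x_j^2+o(x_N+|x'|^2)$.

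Next I would apply the parabolic rescaling $x_N=\sqrt{t}\,\xi_N$, $x_j=t^{1/4}\xi_j$ for $j<N$, whose Jacobian $t^{(N+1)/4}$ cancels the prefactor in \eqref{asymptotics and curvatures}. After rescaling, the constraint $x\in B_R(x_0)$ becomes $\xi_N>\tfrac{|\xi'|^2}{2R}+O(\sqrt t)$ and $d(x)^2/(4t)\to\tfrac14\bigl(\xi_N-\tfrac12\sum_j\kappa_j\xi_j^2\bigr)^2$. Using Theorem \ref{th:nonlinear-Varadhan} to replace $u(x,t)$ by $\Phi^{-1}\!\bigl(-d(x)^2/(4t)\bigr)$ in the integrand and passing to the new variable $\eta_N:=\xi_N-\tfrac12\sum_j\kappa_j\xi_j^2$, the rescaled integrand concentrates on the region $\eta_N>\tfrac12\sum_j(\tfrac1R-\kappa_j(y_0))\,\xi_j^2$. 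By Fubini, integrating first in the ellipsoidal slice in $\xi'$ at fixed $\eta_N$ contributes a factor $\eta_N^{(N-1)/2}\prod_j(\tfrac1R-\kappa_j(y_0))^{-1/2}$ times a dimensional constant, and the remaining one-dimensional integral in $\eta_N$ of $\eta_N^{(N-1)/2}\Phi^{-1}(-\eta_N^2/4)$ yields the $\phi$-dependent constant $c(\phi,N)$, producing exactly the right-hand side of \eqref{asymptotics and curvatures}.

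The main obstacle is upgrading the locally uniform statement of Theorem \ref{th:nonlinear-Varadhan} to pointwise bounds on $u(x,t)$ quantitative enough to justify dominated convergence under the parabolic rescaling --- specifically, matching Gaussian upper and lower bounds in terms of $d(x)/\sqrt t$, valid at tangential distances of order $t^{1/4}$ from $\partial\Omega$. The natural route is to compare $u$ with barriers built from the one-dimensional (possibly nonlinear) heat equation on a half-line and then adjust their normal profile via the $C^2$ structure of $\partial\Omega$ near $y_0$. The degenerate case $\kappa_j(y_0)=1/R$ is handled automatically, since then the paraboloidal region opens into an unbounded cylinder in the $\xi_j$-direction and the slice integral diverges, matching the $+\infty$ convention stated at the end of the theorem.
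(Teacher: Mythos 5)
Your overall plan---localize near $y_0$, pass to $C^2$ normal coordinates, parabolically rescale with $x_N=\sqrt t\,\xi_N$, $x_j=t^{1/4}\xi_j$, and reduce to a Gaussian-type integral over a paraboloid---is a legitimate alternative framing. The paper instead integrates over level sets $\Gamma_s$ of the signed distance $d^*$ via the co-area formula and invokes a geometric lemma for the small-$s$ behaviour of $\mathcal H^{N-1}(\Gamma_s\cap B_R(x_0))$; the curvature factor $\prod_j(\tfrac1R-\kappa_j(y_0))^{-1/2}$ arises in essentially the same way in both routes, and your Fubini slice computation would, if completed, reprove that lemma.

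However, there is a genuine gap in the step where you replace $u(x,t)$ by $\Phi^{-1}\bigl(-d(x)^2/(4t)\bigr)$ in the rescaled integrand, and this gap produces the wrong constant $c(\phi,N)$. Theorem~\ref{th:nonlinear-Varadhan} is a \emph{logarithmic} asymptotic and is uniform only on compact subsets of $\Omega$; in the boundary layer $d(x)\sim\sqrt t$---exactly where the mass concentrates and where the limit is decided---it gives no pointwise asymptotic for $u$ itself, and controlling $\log u$ to within $o(1/t)$ there does not pin down the profile. The correct boundary-layer profile of $u$ in $\eta=d(x)/\sqrt t$ is the one-dimensional half-line similarity solution: for the heat equation this is $F(\eta)=\tfrac{1}{2\sqrt\pi}\int_\eta^\infty e^{-s^2/4}\,ds$ for the Cauchy problem \eqref{cauchy} and $2F(\eta)$ for the Dirichlet problem \eqref{diffusion}--\eqref{initial}. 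This is not $\Phi^{-1}(-\eta^2/4)=e^{-\eta^2/4}$; the two differ by a factor growing like $\eta$, so the one-dimensional integrals $\int_0^\infty(\cdot)\,\eta^{(N-1)/2}d\eta$ and hence $c(\phi,N)$ come out different. The barrier comparison you describe at the end as ``the natural route'' is exactly what the paper constructs (the sub/supersolutions $v_\pm=F_\pm(t^{-1/2}d^*)$ together with the comparison principle); carrying it out would \emph{replace}, not justify, your $\Phi^{-1}$ ansatz. Note also that for nonlinear $\phi$ there is no closed-form half-line profile, which is why the appendix proves the result only for $\phi(s)\equiv s$ and cites \cite{MSjde2nd} for the general case. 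Once the $F$-profile is substituted, your rescaling/Fubini argument does recover the paper's limit, so the defect is localized but essential.
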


\renewcommand{\thetheorem}{\arabic{section}.\arabic{theorem}}
 \setcounter{theorem}{0}

By this theorem and the balance law also used in \cite[Theorem 2.1]{MSpoincare} and \cite{MSannals, MSindiana}, first
we can begin with inferring that $\sum\limits_{j=1}^{N-1}\left( \frac 1R -\kappa_j\right)$ equals a positive constant on some portion of the boundary;  and hence, analyticity of $\Gamma$ helps us extend
such an equality to the whole connected component of $\partial\Omega$ parallel to $\Gamma$.

Sections \ref{section2} and \ref{section3} are devoted to
the proofs of Theorems \ref{th:nonlinear Matzoh revisited} and \ref{th:Matzoh revisited}, respectively. In the Appendix, for the reader's convenience, we give a proof of Theorem \ref{th:interaction curvatures} for the heat equation under the assumption that 
$\displaystyle{ \max_{1\le j \le N-1} \kappa_j(y_0) < \frac 1R}$. Then the proof of Theorem \ref{th:Matzoh revisited} will be self-contained.


\setcounter{equation}{0}
\setcounter{theorem}{0}

\section{Proof of Theorem \ref{th:nonlinear Matzoh revisited}}
\label{section2}

Let us prove item (I) first. We start with a lemma.
\begin{lemma}
\label{le:exterior sum}
 Under the assumptions of \ {\rm (I)} of Theorem \ref{th:nonlinear Matzoh revisited}, there exists a number  $R > 0$ such that
\begin{equation}
\label{distance and parallel}
d(x) = R\ \mbox{ for all } x \in \partial D\  \mbox{ and } \  \Omega = D \cup 
\bigcup\limits_{x\in\partial D} B_R(x) = \{ y \in \mathbb R^N : \mbox{\rm dist}(y, \overline{D}) < R \}.
\end{equation}
\end{lemma}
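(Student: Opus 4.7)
The plan is to use Theorem \ref{th:nonlinear-Varadhan} to force the distance function $d$ to be constant on $\partial D$, and then to exploit the $C^1$ regularity of $\partial D$ to identify $\Omega$ with the open $R$-neighborhood of $\overline D$.

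First I would apply Theorem \ref{th:nonlinear-Varadhan} on the compact set $\partial D\subset\Omega$. Hypothesis \eqref{stationary level 1} makes $-4t\Phi(u(x,t))$ independent of $x$ for $x\in\partial D$ at each $t>0$, so its uniform limit $d(x)^2$ as $t\to 0^+$ must also be independent of $x\in\partial D$. Since $\overline D\subset\Omega$ forces $d>0$ on $\partial D$, there exists $R>0$ with $d\equiv R$ on $\partial D$.

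Next I would verify the inclusion $V:=D\cup\bigcup_{x\in\partial D}B_R(x)\subset\Omega$ together with its identification with $\{y\in\mathbb R^N:\dist(y,\overline D)<R\}$. For each $x\in\partial D$, the equality $d(x)=R$ means $B_R(x)$ is disjoint from $\partial\Omega$; convexity then implies that the segment from $x\in\Omega$ to any other point of $B_R(x)$ stays in $B_R(x)$ and thus avoids $\partial\Omega$, forcing the endpoint to remain in $\Omega$. Hence $B_R(x)\subset\Omega$. The identification with the $R$-neighborhood of $\overline D$ is a routine set-theoretic observation based on the fact that for any $y\notin\overline D$ a nearest point of $\overline D$ to $y$ necessarily lies on $\partial D$.

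The reverse inclusion $\Omega\subset V$ is the main content, and I would argue it by contradiction. If $\Omega\setminus V\neq\emptyset$, the path-connectedness of $\Omega$ together with the intermediate value theorem applied to $y\mapsto\dist(y,\overline D)$ along a path in $\Omega$ from $\Omega\setminus V$ to $D$ would produce a point $y^*\in\Omega$ with $\dist(y^*,\overline D)=R$. Let $x^*\in\partial D$ be a nearest point of $\overline D$ to $y^*$, and $z^*\in\partial\Omega$ a nearest point of $\partial\Omega$ to $x^*$; then $|y^*-x^*|=|z^*-x^*|=R$. Moreover, since $d\equiv R$ on $\partial D$, the inequality $|z^*-x|\ge d(x)=R=|z^*-x^*|$ for every $x\in\partial D$ shows that $x^*$ also minimizes $x\mapsto|z^*-x|$ on $\partial D$.

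The hard part will be to conclude $y^*=z^*$, and this is precisely where the $C^1$ regularity of $\partial D$ is essential. Being critical points on the $C^1$ hypersurface $\partial D$ of the two squared-distance functions $|y^*-\cdot|^2$ and $|z^*-\cdot|^2$, both displacements $y^*-x^*$ and $z^*-x^*$ must be scalar multiples of the unit normal to $\partial D$ at $x^*$. A short case analysis using the local graph representation of $\partial D$ then shows that both scalars are positive: for $y^*-x^*$ this is immediate from $y^*\notin\overline D$; for $z^*-x^*$ one first upgrades $d\equiv R$ on $\partial D$ to $d\ge R$ on $\overline D$ (since any segment from $y\in D$ to its nearest point on $\partial\Omega$ must cross $\partial D$), and the inward choice would then produce a point of $D$ at distance strictly less than $R$ from $z^*\in\partial\Omega$, contradicting $d\ge R$ on $\overline D$. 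Both vectors having common length $R$ then forces $y^*=z^*$, contradicting $y^*\in\Omega$ versus $z^*\in\partial\Omega$ and completing the proof.
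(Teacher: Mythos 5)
Your proof is correct and takes essentially the same route as the paper. Both arguments get $d\equiv R$ on $\partial D$ from Theorem A together with \eqref{stationary level 1}, both establish the easy inclusion $\{y:\dist(y,\overline D)<R\}\subset\Omega$, and both use the $C^1$ regularity of $\partial D$ at the same critical point: to show that the set $\{y:\dist(y,\overline D)=R\}$ (which, by the nearest-point normality on the $C^1$ hypersurface, is exactly $\{x+R\nu_{\partial D}(x):x\in\partial D\}$) lies in $\partial\Omega$, after which connectedness of $\Omega$ forces equality. The paper does this constructively (its display \eqref{use C2 smoothness} encapsulates the normality and uniqueness claims, and the key auxiliary fact $d\ge R$ on $\overline D$ is implicit in $B_R(y(x))\cap D=\varnothing$), whereas you argue by contradiction via a single exceptional point $y^*$ and spell out the first-variation and $d\ge R$ steps in more detail — but the underlying ideas coincide.
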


\noindent
\begin{proof} Theorem \ref{th:nonlinear-Varadhan} and  \eqref{stationary level 1} imply that there exists a number $R > 0$ such that $d(x) = R$ 
for all  $x \in \partial D,$ which in turn immediately gives us the inclusion:
\begin{equation}
 \label{the left includes the right}
 \Omega \supset D \cup \bigcup\limits_{x\in\partial D} B_R(x).
 \end{equation}
 We observe that
\begin{equation}
\label{distance and balls}
 D \cup \bigcup\limits_{x\in\partial D} B_R(x) = \{ y \in \mathbb R^N : \mbox{\rm dist}(y, \overline{D}) < R \}.
 \end{equation}

 \par
Let us show the converse inclusion of \eqref{the left includes the right}.
Since $\partial D$ is of class $C^1$, from the first part of \eqref{distance and parallel} we have
\begin{equation}
\label{use C2 smoothness}
\overline{B_R(x)} \cap \partial\Omega = \{ y(x) \} \ \mbox{ and } B_R(y(x)) \cap D =\varnothing\ \mbox{ for every } x \in \partial D,
\end{equation}
where $y(x) = x + R\nu_{\partial D}(x)$  and $\nu_{\partial D}(x)$ denotes the unit outward normal vector  to $\partial D$ at $x \in \partial D$.
Then it follows that
\begin{equation}
\label{distance to D}
\{ y \in \mathbb R^N : \mbox{ dist}(y, \overline{D}) = R \} = \{ y(x)  : x \in \partial D \} \subset \partial\Omega.
\end{equation}
By \eqref{the left includes the right} and \eqref{distance and balls}, 
\begin{equation}
\label{distance to D and interior}
\{ y \in \mathbb R^N : \mbox{ dist}(y, \overline{D}) < R \} \subset \Omega.
\end{equation}
Observe that
\begin{equation}
\label{interior and boundary}
\partial \{ y \in \mathbb R^N : \mbox{ dist}(y, \overline{D}) < R \} = \{ y \in \mathbb R^N : \mbox{ dist}(y, \overline{D}) = R \}.
\end{equation}
Since $\Omega$ is a domain, in view of \eqref{distance to D}, \eqref{distance to D and interior} and \eqref{interior and boundary}, we conclude that
\begin{equation}
\label{determination of the domain}
\{ y \in \mathbb R^N : \mbox{ dist}(y, \overline{D}) = R \} = \partial\Omega\ \mbox{ and } \{ y \in \mathbb R^N : \mbox{ dist}(y, \overline{D}) < R \} = \Omega,
\end{equation}
which yields the converse inclusion of \eqref{the left includes the right}. 
\end{proof}

\begin{lemma}
\label{le:reflection of D implies that of omega}
Let $\ell$ be a unit vector in $\mathbb R^N,\ \lambda \in \mathbb R,$ and let $\pi_\lambda$ be the hyperplane $x\cdot\ell = \lambda$. 
\par
Set $D_\lambda = \{ x \in D : x\cdot\ell > \lambda \}$ and $\Omega_\lambda = \{ x \in \Omega : x\cdot\ell > \lambda \},$
and denote by $D_\lambda^\prime$  and $\Omega_\lambda^\prime$ the reflection 
of $D_\lambda$ and $\Omega_\lambda$ in the plane $\pi_\lambda$,
respectively. 
\par
Under the assumptions of \ {\rm (I)} of Theorem \ref{th:nonlinear Matzoh revisited}, if $D_\lambda^\prime \subset D$, then $\Omega_\lambda^\prime \subset \Omega.$ 
\end{lemma}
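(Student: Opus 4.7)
The strategy is to exploit the explicit description of $\Omega$ obtained in Lemma \ref{le:exterior sum}, namely
$$
\Omega = \{\, y \in \mathbb R^N : \dist(y,\overline{D}) < R \,\},
$$
together with the fact that orthogonal reflection across $\pi_\lambda$ is an isometry. Pick an arbitrary point $z \in \Omega_\lambda'$, so that $z$ is the reflection of some $y \in \Omega_\lambda$ across $\pi_\lambda$; equivalently $y\cdot\ell > \lambda$, $z\cdot\ell < \lambda$, and $\dist(y,\overline{D}) < R$. The goal is to produce some $w \in \overline{D}$ with $|z - w| < R$, which will show $z \in \Omega$ by the characterization above. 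Since $\dist(y,\overline{D}) < R$, there exists $x \in \overline{D}$ with $|y - x| < R$, and I would split according to the sign of $x\cdot\ell - \lambda$.

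\textbf{Case 1: $x\cdot\ell \ge \lambda$.} Here I would argue that $x$ lies in $\overline{D_\lambda}$: if $x\cdot\ell > \lambda$, any sequence in $D$ approximating $x$ eventually lies above $\pi_\lambda$, hence in $D_\lambda$; if $x\cdot\ell = \lambda$, the reflection is $x$ itself. In either case, taking closures in the inclusion $D_\lambda' \subset D$ yields $\overline{D_\lambda'} \subset \overline{D}$, so the reflected point $x^* \in \overline{D}$. Then $|z - x^*| = |y - x| < R$, giving $z \in \Omega$.

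\textbf{Case 2: $x\cdot\ell < \lambda$.} Here I would use the reflection identity
$$
|z - x|^2 = |y - x|^2 + 4\,((y\cdot\ell)-\lambda)\,((x\cdot\ell)-\lambda),
$$
obtained by writing $z = y - 2((y\cdot\ell)-\lambda)\,\ell$ and expanding. Because $(y\cdot\ell)-\lambda > 0$ while $(x\cdot\ell)-\lambda < 0$, the cross term is non-positive, so $|z - x| \le |y - x| < R$, and again $z \in \Omega$ (this time using the original $x \in \overline{D}$).

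\textbf{Main obstacle.} The argument itself is short, so the only delicate point is being careful with the borderline case $x\cdot\ell = \lambda$ and, more generally, with the passage from $D_\lambda' \subset D$ (an inclusion between open sets) to the analogous inclusion for closures that is actually used in Case 1. This is easily handled by the sequential argument indicated above, so no genuine obstruction arises; the content of the lemma is essentially the combination of Lemma \ref{le:exterior sum} with the trivial reflection inequality.
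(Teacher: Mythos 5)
Your proof is correct. It takes essentially the same foundational input as the paper, namely the explicit description $\Omega = \{\,y : \dist(y,\overline D) < R\,\}$ from Lemma \ref{le:exterior sum}, but the execution is genuinely different. The paper introduces the symmetrized domain $D_{\rm sym} = D_\lambda \cup (\pi_\lambda\cap D) \cup D_\lambda'$, notes $D_{\rm sym}\subset D$, and applies the distance characterization to $D_{\rm sym}$ to get a $\pi_\lambda$-symmetric set sandwiched inside $\Omega$; the final step ``and hence $\Omega_\lambda'\subset\Omega$'' is left rather terse. You instead argue pointwise: given $z = y'$ with $y\in\Omega_\lambda$, you pick a witness $x\in\overline D$ for $\dist(y,\overline D)<R$ and split on the side of $\pi_\lambda$ containing $x$. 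The observation that drives Case 2 --- when $x$ lies on the same side as $z$, one should \emph{not} reflect $x$ but rather use the identity $|z-x|^2 = |y-x|^2 + 4((y\cdot\ell)-\lambda)((x\cdot\ell)-\lambda) \le |y-x|^2$ directly --- is exactly what makes the argument go through without having to decide whether the reflected witness lands in $\overline D$, a point the paper's compressed proof glosses over. In short, your version buys a fully explicit, self-contained verification at the cost of a small case analysis; the paper's version buys brevity via the auxiliary set $D_{\rm sym}$ at the cost of leaving the reader to fill the last step. Both are sound and of comparable length once the paper's implicit step is unpacked. One minor stylistic remark: in Case 1, the two sub-cases $x\cdot\ell>\lambda$ and $x\cdot\ell=\lambda$ use slightly different justifications (closure of $D_\lambda'\subset D$ versus $x^*=x$), so ``in either case, taking closures\dots'' reads a bit awkwardly, but the content is right.
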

\begin{proof}
Since $D_\lambda^\prime \subset D,$ then also the set $D_{\rm sym}=D_\lambda\cup (\pi_\lambda\cap D)\cup D_\lambda^\prime$
is contained in $D.$ Thus, by Lemma \ref{le:exterior sum}, there holds that
$$
D_{\rm sym}\cup\bigcup\limits_{x\in\partial D_{\rm sym}} B_R(x)\subset D\cup\bigcup\limits_{x\in\partial D} B_R(x) = \Omega,
$$
and hence $\Omega_\lambda^\prime \subset \Omega.$
\end{proof}
\vskip.1cm
We can now complete the proof of (I). Lemma \ref{le:reflection of D implies that of omega} allows us to apply the method of moving planes, instead of to either $\Omega$ or $\mathbb R^N \setminus \overline{\Omega}$ 
as in the proof of \cite[Theorem 1.2]{MSpoincare}, directly to either $D$ or $\mathbb R^N \setminus \overline{D}.$  
Apart from this difference, the proof runs with the same arguments used in \cite[Theorem 1.2]{MSpoincare}. It is 
worth noticing that, by \cite[Section 5.2]{Fr}, the method of moving planes is applicable to $C^1$ domains, 
as $D$ is assumed to be.
\par
The proof of (II) is similar to that of  \cite[Theorem 1.3]{MSpoincare}.


\setcounter{equation}{0}
\setcounter{theorem}{0}

\section{Proof of Theorem \ref{th:Matzoh revisited}}
\label{section3}

We recall that $D$ satisfies the {\it interior cone condition}  with respect to $\Gamma,$ 
if for every $x \in \Gamma,$ there exists a finite  
right spherical open cone $K_x$ with vertex $x$ 
such that $K_x \subset D$ and $\overline{K_x}\cap \partial D =\{ x\}$.
\par
In view of the proof of \cite[Theorem 2.1]{MSpoincare}, Theorem \ref{th:Matzoh revisited} 
directly follows from the following lemma ---
note that this holds  for general domains $\Omega,$ including 
the case in which their boundaries  are unbounded. 
 
\begin{lemma}
\label{le:preliminary} Under the assumptions of Theorem \ref{th:Matzoh revisited}, 
the following assertions hold, even if $\partial\Omega$ is unbounded.
\begin{enumerate}[\rm (1)]
\item There exists a number $R > 0$ such that $d(x) = R$ for every $x \in \Gamma$; 
\item $\Gamma$ is a real analytic hypersurface;
\item there exists a connected component $\gamma$ of $\pa\Om,$ that is also a real analytic hypersurface,
such that the mapping $\gamma \ni \xi \mapsto x(\xi) \equiv \xi + R\nu(\xi) \in \Gamma,$ 
where $\nu(\xi)$ is the inward unit normal vector to $\pa\Om$ at $\xi \in \gamma$, is a diffeomorphism; 
in particular, $\gamma$ and $\Gamma$ are parallel hypersurfaces at distance $R$;
\item it holds that
\begin{equation}
\label{bounds of curvatures}
 \max_{1\le j \le N-1}\kappa_j(\xi) < \frac 1R\ \mbox{ for every } \xi \in \gamma,
\end{equation}
where $\kappa_1(\xi), \cdots, \kappa_{N-1}(\xi)$ are the principal curvatures of $\pa\Om$ at $\xi \in \gamma$ with respect to the inward unit normal vector to $\pa\Om$;
\item there exists a number $c > 0$ such that
\begin{equation}
\label{monge-ampere}
\prod_{j=1}^{N-1} \left(\frac 1R-\kappa_j(\xi)\right) = c\quad\mbox{ for every } \xi \in \gamma.
\end{equation}
\end{enumerate}
\end{lemma}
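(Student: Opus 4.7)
My plan is to prove the five items in order, exploiting the real analyticity of $u(\cdot,t)$ on $\Omega$ for $t > 0$ (available since $\phi(s)\equiv s$ makes $u$ a classical heat solution) together with Theorems A and B.

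For item (1), I would apply Theorem A on the compact set $\Gamma\subset\Omega$ (compact because $\partial D$ is bounded and $\overline D\subset\Omega$). Since $u(x,t) = a(t)$ on $\Gamma$, the quantity $-4t\Phi(a(t))$ is independent of $x$, so its uniform limit $d(x)^2$ must be constant on $\Gamma$; call $R > 0$ its square root. By \eqref{nearest component}, $R = \mathrm{dist}(\partial D,\partial\Omega)$, so $\overline{B_R(x)}\subset\overline\Omega$ and meets $\partial\Omega$ for every $x\in\Gamma$.

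For item (2), fix a small $t_0 > 0$. Then $u(\cdot,t_0)$ is real analytic in $\Omega$ and $\Gamma$ is contained in the level set $\{u(\cdot,t_0) = a(t_0)\}$. Since $d$ is $1$-Lipschitz it is differentiable a.e., and at such points $|\nabla d| = 1$; combined with the refined Varadhan asymptotic $u(x,t)\sim\exp(-d(x)^2/(4t))$, this forces $\nabla u(\cdot,t_0)\ne 0$ on a dense subset of $\Gamma$. The analytic implicit function theorem then makes $\{u(\cdot,t_0) = a(t_0)\}$ a real analytic hypersurface through a dense subset of $\Gamma$; since the interior cone condition makes $\Gamma$ a Lipschitz topological $(N-1)$-manifold contained in that level set, $\Gamma$ must locally coincide with the analytic hypersurface through it, and analytic continuation across the remaining lower-dimensional exceptional set shows $\Gamma$ is real analytic.

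Once $\Gamma$ is real analytic, items (3) and (4) follow by adapting Lemma \ref{le:exterior sum}. For each $x\in\Gamma$, the $C^1$-smoothness of $\Gamma$ combined with $d(x) = R$ yields a unique $y(x)\in\overline{B_R(x)}\cap\partial\Omega$, and the map $y:\Gamma\to\partial\Omega$ is real analytic. Its image $\gamma := y(\Gamma)$ is compact, connected, and carries an interior tangent ball of radius $R$ at every point, so $\gamma$ is $C^{1,1}$ with $\kappa_j(\xi)\le 1/R$. To get the strict bound in (4) and the full-component claim in (3), I would reverse the parallel-surface construction: the Jacobian of $\xi\mapsto\xi + R\nu(\xi)$ equals $\prod_j(1 - R\kappa_j(\xi))$ up to sign, so its vanishing at some $\xi$ would create a singularity in the analytic hypersurface $\Gamma$, contradicting (2). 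Hence $\max_j\kappa_j < 1/R$ on $\gamma$, the map $\xi\mapsto\xi+R\nu(\xi)$ is a local diffeomorphism onto $\Gamma$, $\gamma$ is both open and closed in its connected component of $\partial\Omega$, and being the diffeomorphic image of the analytic $\Gamma$, $\gamma$ is itself real analytic.

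For item (5), Theorem B applies at every $\xi\in\gamma$ (where $\partial\Omega$ is real analytic and $\max_j\kappa_j < 1/R$), yielding
\[
\lim_{t\to 0^+} t^{-(N+1)/4} \int_{B_R(x(\xi))} u(y,t)\, dy = c(\phi,N) \prod_{j=1}^{N-1}\Bigl(\tfrac{1}{R} - \kappa_j(\xi)\Bigr)^{-1/2},
\]
where $x(\xi) = \xi + R\nu(\xi)\in\Gamma$. The main obstacle is the \emph{balance law}: using the time-invariance $u(\cdot,t) = a(t)$ on $\Gamma$ together with an integration-by-parts (or sphere-reflection) argument over $B_R(x)$, in the spirit of \cite{MSannals,MSindiana,MSpoincare}, one shows that the left-hand side above is constant in $\xi$. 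Granting this, $\prod_j(1/R - \kappa_j(\xi))$ is constant on an open portion of $\gamma$, and analyticity of $\gamma$ propagates the identity to all of $\gamma$, giving (5) with a positive constant $c$. The difficulty lies in verifying the balance law without a priori regularity of $\partial\Omega$ outside $\gamma$; the argument must localize the analysis to the now-analytic $\gamma$ and rely on interior regularity of $u$ in $\Omega$.
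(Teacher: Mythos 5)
Your plan for item (1) matches the paper's. Items (2)--(5), however, contain genuine gaps, and the overall logical architecture is wrong: in the paper, (3), (4), (5) are \emph{not} proved in the order they are stated — a local version of (5) is established first and is what makes (4) possible.

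For item (2), your argument is not valid as stated. Theorem~\ref{th:nonlinear-Varadhan} gives uniform convergence of $-4t\Phi(u(\cdot,t))$ to $d^2$, which carries no information about $\nabla u$ (uniform convergence of functions does not control derivatives), and the fact that $|\nabla d|=1$ where $d$ is differentiable therefore does not ``force $\nabla u(\cdot,t_0)\ne 0$ on a dense subset of $\Gamma$.'' Moreover, even that would not suffice: you must show that for \emph{each} fixed $x_0\in\Gamma$ the gradient is nonzero at \emph{some} time, i.e.\ rule out the scenario $\nabla u(x_0,t)=0$ for all $t>0$. The paper does this by a balance-law argument: if $\nabla u(x_0,t)\equiv 0$, then $\int_{\partial B_r(x_0)}(x-x_0)\,u\,dS_x=0$ for all $r<R$; the interior cone condition at $x_0$, together with \eqref{nearest component}, item (1), and the sharp Varadhan rate, produces an estimate showing this moment integral must be strictly positive for small $t$ --- a contradiction. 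This is the decisive use of the interior cone hypothesis, which your sketch does not exploit (your remark that the cone condition makes $\Gamma$ a Lipschitz manifold is neither needed nor correct).

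For item (4), the argument ``vanishing of the Jacobian $\prod_j(1-R\hat\kappa_j)$ would create a singularity in $\Gamma$, contradicting (2)'' is backwards. The map $x\mapsto x+R\nu_\Gamma(x)$ goes \emph{from} $\Gamma$ (already known analytic) \emph{to} $\gamma$; degeneracy of its Jacobian produces a focal singularity in the \emph{image} $\gamma$, not in $\Gamma$. Analyticity of $\Gamma$ is perfectly compatible with $\hat\kappa_j\equiv 1/R$ (e.g.\ $\Gamma$ a sphere of radius $R$), so (2) alone cannot yield (4). The paper instead proves the weak bound $\hat\kappa_j\le 1/R$ on all of $\Gamma$ by comparison with the touching ball, finds a single point $p_0\in\Gamma$ with strict inequality by comparing against a larger sphere centered at an exterior point $z\in\mathbb R^N\setminus\overline\Omega$, and only \emph{then}, on the resulting small $C^2$-patch of $\partial\Omega$, applies the second balance law (stationary zeros, i.e. $\int_{\partial B_r(x_0)}v\,dS_x=0$) together with Theorem~\ref{th:interaction curvatures} to obtain the local identity $\prod_j(1/R-\kappa_j)=c>0$. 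Pulling this back to $\Gamma$ and using analyticity of $\Gamma$ yields the identity, hence the strict curvature bound, on \emph{all} of $\Gamma$; only now can $\gamma$ be defined globally as a smooth parallel hypersurface and (3)--(5) read off. Your worry in item (5) about ``verifying the balance law without a priori regularity of $\partial\Omega$ outside $\gamma$'' is unfounded --- the balance law concerns $u$ in the interior ball $B_R(x_0)\subset\Omega$ and needs no boundary regularity --- but the deeper issue you missed is that the balance-law/Theorem~B step must come \emph{before} the strict curvature bound, not after.
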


\noindent
{\bf Remark. } We emphasize a new important fact in this lemma: here we {\it do not} assume 
the exterior sphere condition on $\Omega,$ that was needed in previous papers (\cite{MSannals, MSindiana, MSpoincare}) 
to show that $\gamma$ and $\Gamma$ are parallel hypersurfaces. 
Thus, for example, with the aid of this lemma, we can remove the exterior-sphere-condition
assumption from all the theorems 
\cite[Theorem 1.1]{MSannals}, \cite[Theorem 3.1]{MSindiana} and \cite[Theorem 2.1]{MSpoincare},
and obtain very general characterizations 
of the sphere in terms of stationary isothermic surfaces.
\par
Therefore, the occurrence of stationary isothermic surface is a very strong requirement indeed.


\vskip 2ex
\noindent
{\it Proof of Lemma \ref{le:preliminary}. }  First of all, (1) follows from  \eqref{stationary level 3} and Theorem \ref{th:nonlinear-Varadhan}. 
 (2) follows from almost the same argument as in  (ii) of Lemma 3.1 of \cite{MSannals}. Since here we also deal with the Cauchy problem and $\partial \Omega$ is not necessarily bounded, for the reader's convenience we give a proof by using Theorem \ref{th:nonlinear-Varadhan} directly, instead of dealing with the Laplace transform of the solution as in \cite{MSannals}.  Besides Theorem \ref{th:nonlinear-Varadhan}, we use the balance law with respect to stationary critical points  of the solution, the interior cone condition of $D$ together with \eqref{nearest component}, \eqref{stationary level 3},  and (1). 
  
 It suffices to show that, for every point $x\in \Gamma,$
there exists  a time $t > 0$ such that
$\nabla u(x,t)\not= 0;$
then, (2) follows from \eqref{stationary level 3}, analyticity
of $u$ with respect to the space variable, and the implicit function theorem. We  use a balance law with respect to stationary critical points  of the caloric functions stated as follows
(see \cite{MSannals} for a proof): Let $G$ be a domain in $\mathbb R^N$. For $x_0\in G$,  a solution $v = v(x,t)$ of the heat equation in $G \times (0,+\infty)$ is such that $\nabla v(x_0,t)=0\ $ for every $t > 0$ if and only if
\begin{equation}
\label{eq:firstbalance}
\int\limits_{\pa B_r(x_0)}(x-x_0)v(x,t)\ dS_x=0\
\mbox{for every } (r,t) \in [0,\mbox{ dist}(x_0,\pa G)) \times (0,+\infty).
\end{equation}

Assume by contradiction that there exists a point $x_0\in\Gamma$
such that
$\nabla u(x_0,t)= 0$ for every $t > 0.$
By \eqref{eq:firstbalance} we can infer that
\begin{equation}
\label{balancelaw of critical point}
\int\limits_{\partial B_r(x_0)} (x-x_0)\ u(x,t)\ dS_x = 0\ \mbox{ for every } (r,t) \in (0, R) \times (0,+\infty).
\end{equation}
 Here let us choose $r = \frac R2$.

On the other hand, since $D$ satisfies the interior cone condition, there
exists a finite right spherical open 
cone $K$ with vertex at $x_0$ such that $K \subset D$
and $\overline{K} \cap \partial D
= \{x_0\}.$
By translating and rotating if needed, we can suppose that $x_0 = 0$
and that $K$ is the set $\{ x \in B_\rho(0) : x_N < -|x|\cos\theta \},$
where $\rho
\in (0,\frac R2)$ and  $\theta \in (0,\frac \pi 2).$

Since $K \subset D$ and $\overline{K} \cap
\partial D = \{0\},$  (1) and \eqref{nearest component} imply that
\begin{equation}
\label{inthecone}
d(x) > R\ \mbox{ for every } x \in K.
\end{equation}
Let us set
\begin{equation}
\label{dualcone}
 V_s = \{ x \in \partial B_{s}(0) : x_N \ge s \sin\theta \}\ \mbox{ for } s > 0.
\end{equation}
Then 
\begin{equation}
\label{contactset}
\partial\Omega \cap \partial B_R(0) \subset V_R,
\end{equation}
because, otherwise, there would be a point in $K$ contradicting
(\ref{inthecone}).

Thus, from (\ref{contactset}) it follows that we can choose a small number
$\delta >0$ such that
\begin{equation}
\label{away}
d(x) \ge \frac R2 + 2\delta\ \mbox{ for every } x \in \partial B_{\frac R2}(0) \cap \{ x_N \le 0 \}.
\end{equation}
Since, by Theorem \ref{th:nonlinear-Varadhan}, 
$-4t\log u(x,t)$ converges uniformly on
$\partial B_{\frac R2}(0)$ to $d(x)^2$ as
$t \to 0^+,$ we can choose
$t^* > 0$ such that
$$
\left|-4t\log u(x,t) -d(x)^2\right|<\delta^2\ \mbox{ for every } (x,t) \in \partial B_{\frac R2}(0)\times (0,t^*).
$$
This latter inequality, together with (\ref{dualcone}),
(\ref{contactset}), and (\ref{away}),  gives, for every $t \in (0,t^*),$
the following two estimates:
\begin{eqnarray}
&&\int\limits_{\partial B_{\frac R2}(0) \cap \{ x_N \le 0 \}}
x_N\ u(x,t)\ dS_x\ge
-\frac R4\ e^{-{\frac 1{4t}}(\frac {R^2}4+2R\delta+ 3\delta^2)}\ \mathcal H^{N-1}\left(\partial B_{\frac R2}(0)\right),\label{estimate-}
 \\
&&\int\limits_{V_{\frac R2} \cap \overline{\Omega_{\frac R2 +\delta}}} x_N\ u(x,t)\ dS_x\ge
\frac R2\sin\theta\  e^{-{\frac 1{4t}}(\frac {R^2}4+R\delta+ 2\delta^2)}\mathcal H^{N-1}\left(V_{\frac R2} \cap \overline{\Omega_{\frac R2 +\delta}}\right).\label{estimate+}
\end{eqnarray}
Here $\mathcal H^{N-1}(\cdot)$ denotes the $(N-1)-$dimensional Hausdorff measure and
$\Omega_{\frac R2 +\delta}$ is defined by
$$
\Omega_{\frac R2 +\delta} = \left\{ x \in \Omega : d(x) < \frac R2 +\delta \right\}.
$$
A consequence of \eqref{estimate-} and \eqref{estimate+}  is that,
for every  $t \in (0,t^*),$
\begin{eqnarray*}
&&\int\limits_{\partial B_{\frac R2}(0)} x_N\ u(x,t)\ dS_x 
\\
&&\ge \int\limits_{V_{\frac R2} \cap \overline{\Omega_{\frac R2 +\delta}}} x_N\ u(x,t)\ dS_x
+ \int\limits_{\partial B_{\frac R2}(0) \cap
\{x_N \le 0\}} x_N\
u(x,t)\ dS_x 
\\
&& \ge \frac R4  e^{-{\frac 1{4t}}(\frac {R^2}4+R\delta+ 2\delta^2)}\left[ 2\sin\theta \mathcal H^{N-1}\left(V_{\frac R2} \cap \overline{\Omega_{\frac R2 +\delta}}\right) -
e^{-{\frac 1{4t}}(R\delta+ \delta^2)}\ \mathcal H^{N-1}\left(\pa B_{\frac R2}(0)\right)\right].
\end{eqnarray*}
Therefore, we obtain a contradiction by observing that the first term of this
chain of inequalities equals zero, by \eqref{balancelaw of critical point} with $r = \frac R2$,
while the last term can be made positive
by choosing $t > 0$ sufficiently small. This completes the proof of (2).

Now, based on (1) and (2), let us  prove (3), (4) and (5) at the same time without assuming the exterior sphere condition on $\Omega,$ as pointed out in the previous remark.
\par
In view of (2), let $\nu_\Gamma(x)$ and $\hat\kappa_j(x), \cdots, \hat\kappa_{N-1}(x)$ be the unit outward normal vector to $\partial D$ at $x \in\Gamma$ and the principal curvatures of $\Gamma$ at $x \in\Gamma$ with respect to $\nu_\Gamma(x)$, respectively. Notice that, in view of \eqref{nearest component},  (2) and (1) imply that
\begin{equation}
\label{one-to-one}
\mbox{ for each $x \in \Gamma\ $ there exists a unique $\xi \in \partial\Omega$ satisfying $x \in \partial B_R(\xi)$}.
\end{equation}
Moreover, $\xi = x + R\nu_\Gamma(x)$, and in view of (1) and (\ref{one-to-one}), comparing  the principal curvatures at $x$ of $\Gamma$ with those of the sphere $\partial B_R(\xi)$ yields that
\begin{equation}
\label{curvature-weakinequalityGamma}
\max_{1\le j \le N-1} \hat\kappa_j(x) \le \frac 1R\quad\mbox{ for every }\  x \in \Gamma.
\end{equation}
\par
Since $\Gamma$ is a connected component of $\partial D$, $\Gamma$ is oriented and $\Gamma$ divides $\mathbb R^N$ into two domains. Let $E$ be the one of them which does not intersect $D$.  By  (1) and \eqref{nearest component},  $E \cap \left(\mathbb R^N \setminus \overline{\Omega}\right)$ contains a point, say,  $z.$ Set  $R_0= \mbox{ dist}(z, \Gamma) $. Then $R_0 > R$ and there exists a point $p_0 \in \Gamma$ such that $R_0 = |z-p_0|.$ Comparing the principal curvatures at $p_0$ of $\Gamma$ with those of the sphere $\partial B_{R_0}(z),$ yields that $\displaystyle \hat\kappa_j(p_0) \le \frac 1{R_0} < \frac 1R$ for every $j = 1, \dots, N-1$. By continuity, there exists $\delta_0 > 0$ such that  
\begin{equation}
\label{curvature-strictinequalityGamma}
\max_{1\le j \le N-1} \hat\kappa_j(x) < \frac 1R\quad\mbox{ for every }\  x \in \Gamma\cap \overline{B_{\delta_0}(p_0)}.
\end{equation}
This fact guarantees that the mapping: $B_{\delta_0}(p_0)\cap \Gamma \ni x \mapsto \xi(x) \equiv x + R\nu_\Gamma(x) \in \partial\Omega$ is a local diffeomorphism, that is, by letting 
$P_0 = p_0+R\nu_\Gamma(p_0)$, we can find a neighborhood $U$ of $P_0$ in $\mathbb R^N$ such that the mapping: $B_{\delta_0}(p_0)\cap \Gamma\ni x \mapsto \xi(x) \in U \cap \partial\Omega$ is a diffeomorphism. Moreover, since $\Gamma$ is a real analytic hypersurface because of (2), this diffeomorphism is also real analytic. Hence, $U \cap \partial\Omega$ is a portion of a real analytic hypersurface. 
\par
Notice that
the principal curvatures $\kappa_1(\xi), \cdots, \kappa_{N-1}(\xi)$ of $\partial\Omega$ at $\xi \in U \cap \partial\Omega$ with respect to the inward unit normal vector to $\partial\Omega$ satisfy
$$
-\kappa_j(\xi(x)) = \frac {\hat\kappa_j(x)}{1-R\hat\kappa_j(x)} \ \mbox{ for every } j = 1, \dots, N-1 \mbox{ and every }\ x \in \Gamma\cap B_{\delta_0}(p_0).
$$
Therefore, since $1-R\kappa_j(\xi(x)) = 1/(1-R\hat\kappa_j(x))$, we see that (\ref{curvature-strictinequalityGamma}) is equivalent to
\begin{equation}
\label{curvature-strictinequalityOmega}
\max_{1\le j\le N-1} \kappa_j(\xi) < \frac 1R\quad\mbox{ for every }\  \xi \in \overline{U} \cap \partial\Omega.
\end{equation}

We now use another balance law with respect to stationary zeros of the caloric functions stated as follows
(see \cite{MSannals} for a proof): Let $G$ be a domain in $\mathbb R^N$. For $x_0\in G$,  a solution $v = v(x,t)$ of the heat equation in $G \times (0,+\infty)$ is such that $v(x_0,t)=0\ $ for every $t > 0$ if and only if
\begin{equation}
\label{eq:secondbalance}
\int\limits_{\pa B_r(x_0)}v(x,t)\ dS_x=0\
\mbox{for every } (r,t) \in [0,\mbox{ dist}(x_0,\pa G)) \times (0,+\infty).
\end{equation}

Let $P, Q \in U \cap \partial\Omega$ be two distinct points, and let $p, q \in B_{\delta_0}(p_0)\cap\Gamma$ be the points such that
$\xi(p) = P$ and $\xi(q) = Q$. Then, by (\ref{one-to-one}) we have
$$
\overline{B_R(p)}\cap\partial\Omega = \{P\}\ \mbox{ and }\ \overline{B_R(q)}\cap\partial\Omega = \{Q\}.
$$
Consider the function $v = v(x,t)$ defined by
$$
v(x,t) = u(x+p,t) - u(x+q,t)\ \mbox{ for }\ (x,t) \in B_R(0) \times (0,+\infty).
$$
Since $v$ satisfies the heat equation and, by \eqref{stationary level 3}, $v(0,t) = 0$ for every $t > 0$, it follows from (\ref{eq:secondbalance}) that 
$$
\int\limits_{B_R(p)} u(x,t)\ dx =  \int\limits_{B_R(q)} u(x,t)\ dx\ \mbox{ for every }\ t > 0.
$$
Therefore, by Theorem \ref{th:interaction curvatures} and \eqref{curvature-strictinequalityOmega},  multiplying both sides by $t^{-\frac{N+1}4 }$ and letting $t \to 0^+$ yield that
$$
\prod_{j=1}^{N-1}\left(\frac 1R-\kappa_j( P)\right) = \prod_{j=1}^{N-1}\left(\frac 1R-\kappa_j(Q)\right).
$$
Hence, it follows that there exists a constant $c > 0$ such that
\begin{equation}
\label{monge-ampere-formula-local}
\prod_{j=1}^{N-1}\left(\frac 1R-\kappa_j(\xi)\right) = c\ \mbox{ for every }\ \xi \in U \cap \partial\Omega.
\end{equation}
Since $1-R\kappa_j(\xi(x)) = 1/(1-R\hat\kappa_j(x))$, we see that
$$
\prod_{j=1}^{N-1}\left(\frac 1R-\hat\kappa_j(x)\right) = \frac 1{cR^{2(N-1)}}\ ( >0)\ \mbox{ for every }\ x \in B_{\delta_0}(p_0) \cap \Gamma.
$$
Moreover, analyticity of $\Gamma$ yields that this equality holds also for every $x \in \Gamma$ and hence by (\ref{curvature-weakinequalityGamma})
 \begin{equation}
\label{curvature-strictinequality-wholeGamma}
\max_{1\le j \le N-1} \hat\kappa_j(x) < \frac 1R\quad\mbox{ for every }\  x \in \Gamma.
\end{equation}
\par
Therefore, with the aid of this strict inequality, by setting
\begin{equation}
\label{definition of gamma}
\gamma = \{ \xi(x) \in \mathbb R^N : x \in \Gamma \},
\end{equation}
we see that the mapping:  $\Gamma \ni x \mapsto \xi(x) \equiv x + R\nu_\Gamma(x) \in \gamma$ is a real analytic diffeomorphism because of analyticity of $\Gamma$ and $\gamma$ is a connected component of
$\partial\Omega$ which is a real analytic hypersurface. Since the mapping: $\gamma \ni \xi \mapsto x(\xi) \equiv \xi + R\nu(\xi) \in \Gamma$ is the inverse mapping of the previous diffeomorphism, (3) holds.  (4) follows from (\ref{curvature-strictinequality-wholeGamma}). 
\par
Finally, combining  analyticity of $\gamma$ with (\ref{monge-ampere-formula-local}) yields (\ref{monge-ampere}).
The proof is complete. \qed


\setcounter{equation}{0}
\setcounter{theorem}{0}

\def\theequation{B.\arabic{equation}}
\def\thetheorem{B.\arabic{theorem}}

\appendix
\section*{Appendix}
\label{appendix}
Here, for the reader's convenience, we give a proof of Theorem \ref{th:interaction curvatures} for the heat equation provided
$\displaystyle{ \max_{1\le j \le N-1} \kappa_j(y_0) < \frac 1R}$ by using some idea and a geometric lemma of \cite{MSprsea}.

\vskip 2ex
\noindent
{\bf Proof of  Theorem \ref{th:interaction curvatures} for the heat equation provided
$\displaystyle{ \max_{1\le j \le N-1} \kappa_j(y_0) < \frac 1R}$.}

Set $\phi(s) \equiv s$. 
We distinguish two cases: 
$$
{\rm (I)}\  \partial\Omega\ \mbox{ is bounded and of class $C^2$;}\quad {\rm (II)}\  \partial\Omega\ \mbox{ is otherwise.}
$$
\par
Let us first show how we obtain case (II) once we have proved case (I). 
Indeed, we can find two $C^2$ domains, say $\Omega_1$ and $\Omega_2,$ with bounded boundaries, 
and a ball $B_\delta(y_0)$ with the following properties:
$\Omega_1$ and $\mathbb R^N \setminus \overline{\Omega_2}$ are bounded; 
$B_R(x_0)\subset\Omega_1 \subset \Omega \subset \Omega_2;$
\begin{equation*}
\label{two constructed domains}
B_\delta(y_0) \cap \partial\Omega \subset \partial\Omega_1\cap\partial\Omega_2\ 
\mbox{ and }\ \overline{B_R(x_0)} \cap \left(\mathbb R^N\setminus\Omega_i \right)= \{y_0\}\ \mbox{ for } i=1, 2.
 \end{equation*}
 \par
Let $u_i=u_i(x,t)\ (i=1,2)$ be the two bounded solutions of either problem \eqref{diffusion}-\eqref{initial} or problem \eqref{cauchy} where $\Omega$ is replaced by $\Omega_1$ or $\Omega_2$, respectively.
Since $\Omega_1 \subset \Omega \subset \Omega_2$, it follows from the comparison principle that
$$
u_2 \le u \ \mbox{ in }\ \Omega \times (0,+\infty) \ \mbox{ and }\ u \le u_1\ \mbox{ in }\ \Omega_1\times(0,+\infty).
$$
Therefore, it follows that for every $t > 0$
$$
t^{-\frac{N+1}4 }\!\!\!\int\limits_{B_R(x_0)}\! u_2(x,t)\ dx \le t^{-\frac{N+1}4 }\!\!\!\int\limits_{B_R(x_0)}\! u(x,t)\ dx \le t^{-\frac{N+1}4 }\!\!\!\int\limits_{B_R(x_0)}\! u_1(x,t)\ dx.
$$
These two inequalities show that case (I) implies case (II).

Hereafter, we assume that $\partial\Omega$ is bounded and of class $C^2$.  Let us consider the signed distance function $d^*= d^*(x)$ 
of $x \in \mathbb R^N$ to the boundary $\partial\Omega$ defined by
\begin{equation}
\label{signed distance}
d^*(x) = \left\{\begin{array}{rll}
 \mbox{ dist}(x,\partial\Omega)\ &\mbox{ if }\ x \in \Omega,
\\
-\mbox{ dist}(x,\partial\Omega)\ &\mbox{ if  }\ x \not\in \Omega.
\end{array}\right.
\end{equation}
Since $\partial\Omega$  is bounded and of class $C^2$,  there exists a number $\rho_0 > 0$ such that $d^*(x)$ is $C^2$-smooth on a compact neighborhood  $\mathcal N$ of the boundary $\partial\Omega$ given by
\begin{equation}
\label{neighborhood of boundary from both sides}
\mathcal N = \{ x \in \mathbb R^N : -\rho_0 \le d^*(x) \le \rho_0 \}.
\end{equation}
We write for $s > 0$
$$
\Omega_s = \{ x \in \Omega : d^*(x) < s \}\  \left( = \{ x \in \mathbb R^N : 0 < d^*(x) < s \} \right).
$$
Introduce a function $F = F(\xi)$ for $\xi \in \mathbb R$ by
$$
F(\xi) = \frac 1{2\sqrt{\pi}} \int_\xi^\infty e^{-s^2/4} ds.
$$
Then $F$ satisfies
\begin{eqnarray*}
&&F^{\prime\prime} + \frac 12\xi F^\prime = 0\  \mbox{ and }  F^\prime < 0\ \mbox{ in } \mathbb R,
\\
&& F(-\infty) = 1,\ F(0) = \frac 12, \ \mbox{ and } F(+\infty) = 0.
\end{eqnarray*}
For each $\varepsilon \in (0,1/4)$, we define two functions $F_\pm = F_\pm(\xi)$ for $\xi \in \mathbb R$ by
$$
F_\pm(\xi) = F(\xi \mp 2\varepsilon).
$$
Then $F_\pm$ satisfies
\begin{eqnarray*}
&&F_\pm^{\prime\prime} + \frac 12\xi F_\pm^\prime = \pm\varepsilon F_\pm^\prime,\  F_\pm^\prime < 0\  \mbox{ and } F_- < F < F_+\ \mbox{ in } \mathbb R,
\\
&& F_\pm(-\infty) = 1,\ F_\pm(0) \gtrless   \frac 12, \ \mbox{ and } F_\pm(+\infty) = 0.
\end{eqnarray*}

By setting
\begin{equation}
\label{def-of-pre-subsupersolutions}
v_{\pm}(x,t) = F_{\pm}\left(t^{-\frac 12}d^*(x)\right)\ \mbox{ for }\ (x,t) \in \mathbb R^N \times (0,+\infty),
\end{equation}
we obtain 
\begin{lemma}
\label{lm:pre-subsupersolutions}
For each $\varepsilon \in (0,1/4),$ there exists $t_{1,\varepsilon} >0$ satisfying
 $$
 (\pm1)\left\{(v_\pm)_t - \Delta v_\pm\right\} > 0\quad\mbox{ in }\ \mathcal N \times (0,t_{1,\varepsilon}].
 $$
\end{lemma}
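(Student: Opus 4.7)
The plan is to compute $\partial_t v_\pm - \Delta v_\pm$ directly by the chain rule, use the ODE satisfied by $F_\pm$ to kill the singular $t^{-1}$ contribution, and then absorb the residual $t^{-1/2}$ term by making $t$ small relative to $\varepsilon$.

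First I would set $\xi := t^{-1/2} d^*(x)$ and, on $\mathcal N \times (0,+\infty)$, compute by the chain rule
\begin{equation*}
(v_\pm)_t = -\tfrac12\,t^{-1}\xi\,F_\pm'(\xi), \quad \Delta v_\pm = t^{-1}|\nabla d^*(x)|^2 F_\pm''(\xi) + t^{-1/2} F_\pm'(\xi)\,\Delta d^*(x).
\end{equation*}
The crucial geometric input is the eikonal identity $|\nabla d^*|\equiv 1$ throughout the tubular neighborhood $\mathcal N$, which is available precisely because $\partial\Omega$ is $C^2$ and $\mathcal N$ sits inside its regular tubular neighborhood (the number $\rho_0$ was chosen for this reason). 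Subtracting and invoking the identity $F_\pm''(\xi) + \tfrac12\xi F_\pm'(\xi) = \pm\varepsilon F_\pm'(\xi)$, the $t^{-1}$ terms collapse into the clean factorization
\begin{equation*}
(v_\pm)_t - \Delta v_\pm = -F_\pm'(\xi)\bigl[\pm\varepsilon\,t^{-1} + t^{-1/2}\,\Delta d^*(x)\bigr].
\end{equation*}

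Second, since $F_\pm' < 0$ on $\mathbb R$, the sign of $(v_\pm)_t - \Delta v_\pm$ coincides with that of the bracketed quantity. Multiplying through by $t > 0$, both desired inequalities $(\pm 1)\{(v_\pm)_t - \Delta v_\pm\} > 0$ reduce to the same scalar condition $\varepsilon > \sqrt{t}\,|\Delta d^*(x)|$ (the $+$ case needs $\sqrt{t}\,\Delta d^* > -\varepsilon$, the $-$ case needs $\sqrt{t}\,\Delta d^* < \varepsilon$).

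Finally, because $\partial\Omega$ is bounded and of class $C^2$, the function $\Delta d^*$ is continuous on the compact set $\mathcal N$, so $M := \max_{\mathcal N}|\Delta d^*|$ is finite; any choice $t_{1,\varepsilon} \in (0,\varepsilon^2/(1+M^2))$ then gives $\sqrt{t}\,|\Delta d^*(x)| < \varepsilon$ uniformly on $\mathcal N \times (0,t_{1,\varepsilon}]$, completing the argument. I do not foresee a real obstacle here: the only subtlety worth flagging is that the apparent $t^{-1}$ blow-up of $\Delta v_\pm$ as $t\to 0^+$ is exactly cancelled by the heat-type ODE for $F$, so that the perturbation parameter $\varepsilon$ alone is what fights the boundedly varying $\Delta d^*$, and the $C^2$ hypothesis on $\partial\Omega$ is used only to ensure both $|\nabla d^*|=1$ and the boundedness of $\Delta d^*$ on $\mathcal N$.
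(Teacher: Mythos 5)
Your proposal is correct and follows essentially the same route as the paper: the paper's ``straightforward computation'' is precisely your chain-rule calculation (using $|\nabla d^*|\equiv 1$ on $\mathcal N$ and the ODE $F_\pm'' + \tfrac12\xi F_\pm' = \pm\varepsilon F_\pm'$) to arrive at $(v_\pm)_t - \Delta v_\pm = -\tfrac1t\left(\pm\varepsilon + \sqrt{t}\,\Delta d^*\right)F_\pm'$, after which the sign is controlled by choosing $t$ small against $M = \max_{\mathcal N}|\Delta d^*|$. Your choice $t_{1,\varepsilon} < \varepsilon^2/(1+M^2)$ is interchangeable with the paper's $t_{1,\varepsilon} = (\varepsilon/2M)^2$.
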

\noindent
{\it Proof.} A straightforward computation gives   
$$
 (v_\pm)_t - \Delta v_\pm = -\frac 1t \left(\pm\varepsilon +\sqrt{t}\Delta d^*\right)\ F_\pm^\prime \quad\mbox{ in }\ \mathcal N \times (0,+\infty).
 $$
Then, for each $\varepsilon \in (0,1/4)$, by setting
$
t_{1,\varepsilon} = \left(\frac \varepsilon{2M}\right)^2,
$
where $M = \max\limits_{x \in \mathcal N} |\Delta d^*(x)|$, we complete the proof. \qed 
\par
Set $\rho_1= \max\{ 2R, \rho_0\}$. Let $u$ be the solution of problem \eqref{diffusion}-\eqref{initial} or problem \eqref{cauchy}. By Theorem \ref{th:nonlinear-Varadhan}, we  have that 
\begin{equation}
\label{Varadhan}
-4t\log u(x,t) \to d^*(x)^2\ \mbox{ as }\ t \to 0^+\ \mbox{ uniformly on }\ \overline{\Omega_{\rho_1 }\setminus\mathcal N}.
\end{equation}
Then, in view of this and the definition (\ref{def-of-pre-subsupersolutions}) of $v_\pm,$ we have
\begin{lemma}
\label{lm:estimate-inside}
Let $u$ be the solution of problem \eqref{diffusion}-\eqref{initial} or problem \eqref{cauchy}.
There exist three positive constants $t_0,\ E_1$ and $E_2$ satisfying
$$
\max\{ |v_+|,\ |v_-|,\ |u| \} \le E_1 e^{-\frac {E_2}t}\ \mbox{ in }\ \overline{\Omega_{\rho_1 }\setminus\mathcal N} \times (0,t_0].
$$
\end{lemma}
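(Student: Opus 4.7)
The plan is to estimate $v_+$, $v_-$, and $u$ separately on the set $K := \overline{\Omega_{\rho_1}\setminus \mathcal N}$. Since $\partial\Omega$ is bounded and of class $C^2$, we have $K = \{ x \in \overline\Omega : \rho_0 \le d^*(x) \le \rho_1 \}$, which is a compact subset of $\Omega$. The decisive feature of $K$ is the uniform positive lower bound $d^*(x) \ge \rho_0 > 0$; all three bounds will be driven by it.

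For $v_\pm$, I would start from the classical Gaussian tail estimate
$$
F(\xi) \le \frac{e^{-\xi^2/4}}{\sqrt{\pi}\,\xi} \quad \text{for every } \xi > 0,
$$
which follows from one integration by parts in the definition of $F$. Since $\varepsilon \in (0, 1/4)$ is fixed, there exists $t_0' > 0$ such that, for every $(x,t) \in K \times (0, t_0']$, the argument $\eta_\pm := t^{-1/2} d^*(x) \mp 2\varepsilon$ is positive and satisfies $\eta_\pm \ge \rho_0/(2\sqrt{t})$. Expanding $\eta_\pm^2$ and discarding lower order terms then yields, for constants $c_0, C_0 > 0$ depending only on $\rho_0$ and $\varepsilon$, the bound $v_\pm(x,t) = F(\eta_\pm) \le C_0\, e^{-c_0 \rho_0^2/t}$ on $K \times (0, t_0']$.

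For $u$, I would invoke Theorem A (Varadhan's formula) directly: since $K$ is a compact subset of $\Omega$,
$$
-4t \log u(x,t) \longrightarrow d^*(x)^2 \quad \text{uniformly on } K \text{ as } t \to 0^+.
$$
Combined with $d^*(x)^2 \ge \rho_0^2$ on $K$, this provides a $t_0'' > 0$ such that $-4t\log u(x,t) \ge \rho_0^2/2$ on $K\times (0, t_0'']$, equivalently $u(x,t) \le e^{-\rho_0^2/(8t)}$ there. Since $0 \le u \le 1$ by the comparison principle, $|u| = u$ and no further work is needed.

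Taking $t_0 = \min(t_0', t_0'')$ and choosing $E_1, E_2 > 0$ so that both of the exponential bounds above are dominated by $E_1 e^{-E_2/t}$ on $K \times (0, t_0]$ completes the proof. There is no genuine obstacle in this argument: everything reduces to the uniform positive lower bound for $d^*$ on $K$, which is built into the definition of $\mathcal N$, and the Gaussian tail estimate together with Theorem A supply all the exponential decay that is required.
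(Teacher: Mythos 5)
Your proof is correct and follows essentially the same route as the paper: both exploit the uniform lower bound $d^*(x)\ge\rho_0$ on $\overline{\Omega_{\rho_1}\setminus\mathcal N}$, bound $v_\pm$ by the exponential decay of $F$ (the paper leaves this as a one-line remark with $t_0\le(\rho_0/4)^2$, while you make it explicit via the Gaussian tail estimate $F(\xi)\le e^{-\xi^2/4}/(\sqrt{\pi}\,\xi)$), and bound $u$ via the uniform convergence in Theorem~\ref{th:nonlinear-Varadhan} to get $u\le e^{-\rho_0^2/(8t)}$. The only differences are expository.
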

\noindent
{\it Proof.} If we choose $t_0 \in (0,\left(\frac{\rho_0}4\right)^2]$, then by (\ref{def-of-pre-subsupersolutions}) 
we can show the desired inequalities for $v_\pm$. As for $u$, by Theorem \ref{th:nonlinear-Varadhan}, we can take $t_0 > 0$ such that
$$
\left|4t\log u(x,t) + d^*(x)^2\right| < \frac 12 \rho_0^2\ \mbox{ for }\ (x,t) \in  \overline{\Omega_{\rho_1 }\setminus\mathcal N}\times (0,t_0],
$$
and hence
$$
u(x,t) < e^{-\frac {d^*(x)^2-\frac 12 \rho_0^2}{4t}}\mbox{ for }\ (x,t) \in\overline{\Omega} \times (0,t_0].
$$
Since $d^*(x) \ge \rho_0$ for $x \in  \overline{\Omega_{\rho_1 }\setminus\mathcal N}$, we get the desired inequality for $u$. \qed 
\par
By setting, for $(x,t) \in \mathbb R^N \times (0,+\infty)$,
\begin{equation}
\label{subsupersolutions-for-heat}
w_{\pm}(x,t) = \left\{\begin{array}{rl}  2v_{\pm}(x,t)\pm 2 E_1 e^{-\frac {E_2}t} &\mbox{ for problem \eqref{diffusion}-\eqref{initial} }, \\
(1\pm\varepsilon) v_{\pm}(x,t)\pm 2E_1 e^{-\frac {E_2}t} &\mbox{ for problem \eqref{cauchy}},
\end{array}\right. 
\end{equation}
we have 
\begin{lemma}
\label{lm:subsupersolutions-for-heat}
Let $u$ be the solution of problem \eqref{diffusion}-\eqref{initial} or problem \eqref{cauchy}.
For each $\varepsilon \in (0,1/4),$ there exists $t_\varepsilon > 0$ satisfying
\begin{equation}
\label{estimates from both}
w_- \le u \le w_+\quad\mbox{ in }\ \overline{\Omega_{\rho_1}} \times(0,t_\varepsilon].
\end{equation}
where $w_\pm$ are defined by {\rm (\ref{subsupersolutions-for-heat})}.
\end{lemma}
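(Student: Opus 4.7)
The plan is to apply the parabolic comparison principle to $u$ and $w_\pm$ on the cylinder $\overline{\Omega_{\rho_0}}\times(0,t_\varepsilon]$, on which $\Omega_{\rho_0}\subset\mathcal N$ so that $v_\pm$ and $w_\pm$ are smooth, and then to extend the resulting pointwise inequality to the annular region $\overline{\Omega_{\rho_1}}\setminus\Omega_{\rho_0}$ by direct appeal to Lemma \ref{lm:estimate-inside}.

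For the differential inequality in the parabolic interior, Lemma \ref{lm:pre-subsupersolutions} already gives $\pm\{(v_\pm)_t-\Delta v_\pm\}>0$ on $\mathcal N\times(0,t_{1,\varepsilon}]$. Multiplying by the positive scalars $2$ in the case of problem \eqref{diffusion}--\eqref{initial}, or $1\pm\varepsilon$ in the case of problem \eqref{cauchy}, preserves this super/subsolution property; the added term $\pm 2E_1 e^{-E_2/t}$ contributes $\pm 2E_1 E_2 t^{-2}e^{-E_2/t}$ to the time derivative, reinforcing the correct sign. Hence $w_+$ is a strict supersolution and $w_-$ a strict subsolution of the heat equation on $\Omega_{\rho_0}\times(0,t_{1,\varepsilon}]$.

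The parabolic boundary splits into three pieces. On $\{d^*=\rho_0\}\times(0,t_\varepsilon]$, Lemma \ref{lm:estimate-inside} bounds $|u|$, $v_+$, and $v_-$ all by $E_1 e^{-E_2/t}$; since $v_\pm\ge 0$, this yields $w_+\ge 2E_1 e^{-E_2/t}\ge u$ and $w_-\le E_1 e^{-E_2/t}-2E_1 e^{-E_2/t}\le 0\le u$. On $\partial\Omega\times(0,t_\varepsilon]$, $d^*\equiv 0$ so that $v_\pm\equiv F(\mp 2\varepsilon)$, and the strict monotonicity of $F$ about $F(0)=\tfrac 12$ gives $F(-2\varepsilon)>\tfrac 12>F(2\varepsilon)$. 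For problem \eqref{diffusion}--\eqref{initial}, $u\equiv 1$ on $\partial\Omega$, and the inequalities $2F(-2\varepsilon)>1>2F(2\varepsilon)$ together with the correction $\pm 2E_1 e^{-E_2/t}$ give $w_-\le 1\le w_+$. For problem \eqref{cauchy}, $u$ is not prescribed on $\partial\Omega$; instead a localized Varadhan-type estimate at the bounded $C^2$ boundary shows $u(x,t)\to \tfrac 12$ as $t\to 0^+$ uniformly on $\partial\Omega$, and the strict inequalities $(1+\varepsilon)F(-2\varepsilon)>\tfrac 12>(1-\varepsilon)F(2\varepsilon)$ together with $\pm 2E_1 e^{-E_2/t}$ deliver the desired comparison once $t_\varepsilon$ is small enough. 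At the initial time $t\to 0^+$, the uniform convergence $u,v_\pm,e^{-E_2/t}\to 0$ on compact subsets of $\Omega_{\rho_0}$ with $d^*$ bounded away from $0$ lets the comparison extend to $t=0^+$ in the limit.

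The main technical obstacle I foresee is the boundary comparison on $\partial\Omega$ in the Cauchy setting: one must quantify the convergence $u\to \tfrac 12$ uniformly on $\partial\Omega$ and arrange for the strict margin $\min\{(1+\varepsilon)F(-2\varepsilon)-\tfrac 12,\ \tfrac 12-(1-\varepsilon)F(2\varepsilon)\}>0$, supplemented by the exponentially small $2E_1 e^{-E_2/t}$, to dominate the corresponding boundary error for sufficiently small $t_\varepsilon=t_\varepsilon(\varepsilon)$. Once all pieces are in place, the parabolic comparison principle yields \eqref{estimates from both} on $\overline{\Omega_{\rho_0}}\times(0,t_\varepsilon]$, and Lemma \ref{lm:estimate-inside} propagates it to $\overline{\Omega_{\rho_1}}\setminus\Omega_{\rho_0}$, completing the proof.
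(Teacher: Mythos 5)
Your decomposition and the treatment of the IBVP case match the paper's: strict sub/super\-solution property from Lemma \ref{lm:pre-subsupersolutions}, boundary comparison on $\partial\Omega$ using $2F(-2\varepsilon)>1>2F(2\varepsilon)$, boundary comparison at $\{d^*=\rho_0\}$ from Lemma \ref{lm:estimate-inside}, vanishing initial data, and then extension to $\overline{\Omega_{\rho_1}}\setminus\Omega_{\rho_0}$ again via Lemma \ref{lm:estimate-inside}. That part is fine.

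For the Cauchy case, however, you take a genuinely different route that leaves a real gap. You run the comparison on $\overline{\Omega_{\rho_0}}\times(0,t_\varepsilon]$, whose lateral boundary includes $\partial\Omega$, and you therefore need the fact that $u(\cdot,t)\to\tfrac12$ uniformly on $\partial\Omega$ as $t\to0^+$. That statement is true for $C^2$ boundaries, but it is neither proved in the paper nor proved by you; you flag it as ``the main technical obstacle'' and invoke an unspecified ``localized Varadhan-type estimate.'' Theorem \ref{th:nonlinear-Varadhan} as stated gives asymptotics only on compact subsets of $\Omega$, not up to $\partial\Omega$, so it does not deliver this. The paper avoids the issue entirely: for the Cauchy problem it compares on the full two-sided tubular neighborhood $\mathcal N$ instead of $\Omega_{\rho_0}$. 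The outer piece of $\partial\mathcal N$ sits at signed distance $-\rho_0<0$, where $t^{-1/2}d^*\to-\infty$, so $v_\pm\to F_\pm(-\infty)=1$; combined with $0\le u\le 1$ and $u\to1$ uniformly there (this follows directly from the explicit heat kernel applied to $\chi_{\Omega^c}$ at points strictly inside $\Omega^c$), the factors $1\pm\varepsilon$ in $w_\pm$ and the $\pm2E_1e^{-E_2/t}$ correction give the strict boundary ordering on $\partial\mathcal N\setminus\Omega$ for small $t$, which is \eqref{on-boundary-cauchy}. This sidesteps any need for boundary asymptotics of $u$ on $\partial\Omega$ itself. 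So either adopt the paper's enlarged comparison domain, or supply a proof of the uniform $u\to\tfrac12$ statement on $\partial\Omega$ with an explicit rate that can be beaten by $\min\{(1+\varepsilon)F(-2\varepsilon)-\tfrac12,\ \tfrac12-(1-\varepsilon)F(2\varepsilon)\}$; as written, the Cauchy half of your argument is incomplete.
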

\noindent
{\it Proof.} For each $\varepsilon \in (0,1/4),$ we set 
$$
t_{2,\varepsilon} = \min\{ t_{1,\varepsilon}, t_0\}.
$$
Since $v_+,\ v_-,\ $ and $u$ are all nonnegative, Lemma \ref{lm:estimate-inside} implies that 
\begin{equation}
\label{inside-inequality}
w_- \le u \le w_+\quad\mbox{ in }\ \overline{\Omega_{\rho_1}\setminus\mathcal N} \times(0,t_{2,\varepsilon}].
\end{equation}
Let $u$ the solution of problem \eqref{diffusion}-\eqref{initial}.
Observe that 
\begin{eqnarray}
&&w_- \le u \le w_+\quad\qquad\ \ \mbox{ on }\ \partial\Omega\times (0,t_{2,\varepsilon}],\label{on-boundary}
\\
&&w_- = u = w_+ = 0\qquad\mbox{ on }\ \left(\Omega\cap\mathcal N\right) \times\{0\}.\label{at-initial}
\end{eqnarray}
Therefore, with the aid of the comparison principle and in view of Lemma \ref{lm:pre-subsupersolutions},
(\ref{inside-inequality}), (\ref{on-boundary}), and (\ref{at-initial}), we obtain \eqref{estimates from both} by setting $t_\varepsilon = t_{2,\varepsilon}$. 

It remains to consider the solution $u$ of problem \eqref{cauchy}. In view of the fact that $F_\pm(-\infty) = 1$, there exists $t_{3,\varepsilon} 
\in (0, t_{2,\varepsilon}]$ such that
\begin{equation}
\label{on-boundary-cauchy}
w_- < u < w_+\ \mbox{ on }\ \left(\partial\mathcal N \setminus \Omega\right) \times (0,t_{3,\varepsilon}].
\end{equation}
Observe also that
\begin{equation}
\label{at-initial-cauchy}
w_- \le u \le w_+ \qquad\mbox{ on }\ \mathcal N\times\{0\}.
\end{equation}
Therefore, with the aid of the comparison principle and in view of Lemma \ref{lm:pre-subsupersolutions},
(\ref{inside-inequality}), (\ref{on-boundary-cauchy}), and (\ref{at-initial-cauchy}), we obtain \eqref{estimates from both} by setting $t_\varepsilon = t_{3,\varepsilon}$. \qed

By writing
$$
\Gamma_s = \{ x \in \Omega : d^*(x) = s \}\ \mbox{ for } s > 0,
$$
let us quote a geometric lemma from \cite{MSprsea} adjusted to our situation.
\begin{lemma}{\rm (\cite[Lemma 2.1, p. 376]{MSprsea})}
\label{lm:asympvol} If $\displaystyle{ \max_{1\le j \le N-1} \kappa_j(y_0) < \frac 1R}$, then
we have:
\begin{equation*}
\label{asympvol}
\lim_{s\to 0^+} s^{-\frac{N-1}{2}} \mathcal H^{N-1}(\Gamma_s\cap B_R(x_0))=2^{\frac{N-1}{2}}\omega_{N-1} \
\left\{\prod_{j=1}^{N-1}\left(\frac1{R}-\kappa_j(y_0)\right)\right\}^{-\frac12},
\end{equation*}
where $\mathcal H^{N-1}$ is the standard $(N-1)$-dimensional Hausdorff measure, and $\omega_{N-1}$ is the volume of the unit ball in $\mathbb R^{N-1}.$
\end{lemma}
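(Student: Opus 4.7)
My plan is to reduce the lemma to a local Taylor expansion near the tangency point $y_0$. A preliminary observation is that the intersection $\Gamma_s\cap B_R(x_0)$ collapses to $\{y_0\}$ as $s\to 0^+$: since $\overline{B_R(x_0)}\cap\pa\Omega=\{y_0\}$ and $\pa\Omega$ is closed, for every neighborhood $U$ of $y_0$ the compact set $\overline{B_R(x_0)}\setminus U$ has positive distance $\eta>0$ from $\pa\Omega$; then every $x\in\Gamma_s\cap B_R(x_0)$ with $s<\eta$ satisfies $d^*(x)=s<\eta$ and must therefore lie in $U$. Hence only the $C^2$ geometry of $\pa\Omega$ in an arbitrarily small neighborhood of $y_0$ is relevant to the asymptotic behavior.

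Next I would choose coordinates with origin at $y_0$ and $e_N$ the inward unit normal to $\pa\Omega$, diagonalizing the second fundamental form, so that near $y_0$ one has $\pa\Omega=\{x_N=h(x')\}$ with $h(0)=0$, $\nabla h(0)=0$, and
$$
h(x')=\tfrac12\sum_{j=1}^{N-1}\kappa_j(y_0)(x'_j)^2+o(|x'|^2),
$$
while $x_0=Re_N$, so $B_R(x_0)$ is locally $\{|x'|^2+x_N^2<2Rx_N\}$. Parameterizing $\Gamma_s$ for small $s$ by the nearest-point map $y\mapsto y+s\nu(y)$ from $\pa\Omega$ (valid on a tubular neighborhood by the $C^2$ regularity), writing $y=(y',h(y'))$, and expanding the condition $|y+s\nu(y)-x_0|^2<R^2$ with the natural scaling $|y'|=O(\sqrt{s})$ in mind, the $R^2$ terms cancel, the $-2Rs$ contribution is the leading $s$-order piece, and the quadratic part in $y'$ combines to give
$$
\sum_{j=1}^{N-1}\bigl(1-R\kappa_j(y_0)\bigr)(y'_j)^2<2Rs+o(s).
$$
Under the hypothesis $\max_j\kappa_j(y_0)<1/R$, this describes (up to an $o(s)$ error in the right-hand side) the ellipsoid $E_s=\{y'\in\mathbb R^{N-1}:\sum_j(1-R\kappa_j)(y'_j)^2<2Rs\}$, whose Lebesgue measure is $\omega_{N-1}(2Rs)^{(N-1)/2}\prod_j(1-R\kappa_j)^{-1/2}$.

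Finally, the surface area of $\Gamma_s\cap B_R(x_0)$ can be written as
$$
\mathcal H^{N-1}(\Gamma_s\cap B_R(x_0))=\int_{A_s}\prod_{j=1}^{N-1}\bigl(1-s\hat\kappa_j(y')\bigr)\sqrt{1+|\nabla h(y')|^2}\,dy',
$$
where $A_s$ is the admissible $y'$-set identified above and $\hat\kappa_j$ denote the principal curvatures of $\pa\Omega$; on $A_s$ one has $|y'|=O(\sqrt{s})$, so the integrand equals $1+O(\sqrt{s})$ uniformly, and sandwiching $A_s$ between ellipsoids of the form $\{\sum_j(1-R\kappa_j)(y'_j)^2<2Rs\pm C s^{3/2}\}$ gives $|A_s|=|E_s|\bigl(1+O(\sqrt{s})\bigr)$. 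Dividing by $s^{(N-1)/2}$ and using the algebraic identities $(2Rs)^{(N-1)/2}R^{-(N-1)/2}=(2s)^{(N-1)/2}$ together with $\prod_j(1-R\kappa_j)^{-1/2}=R^{-(N-1)/2}\prod_j(1/R-\kappa_j)^{-1/2}$ then delivers the claimed limit. The main obstacle is to synchronize the two error sources — the $o(|x'|^2)$ remainder in the expansion of $h$, which perturbs the defining inequality of $A_s$ by $o(s)$, and the deviation of the tube Jacobian from $1$ — and to verify that both contribute only $1+o(1)$ factors after the $s^{-(N-1)/2}$ rescaling.
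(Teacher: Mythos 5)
The paper does not prove this lemma: it is quoted verbatim from \cite[Lemma 2.1, p.~376]{MSprsea} and used as a black box in the appendix, so there is no in-paper argument to compare against. Your proposal supplies a direct proof from scratch, and in outline it is correct: the localization argument (that $\Gamma_s\cap B_R(x_0)$ collapses to $y_0$), the choice of principal coordinates with $\pa\Omega=\{x_N=h(x')\}$ and $x_0=Re_N$, the use of the inward tube map $y\mapsto y+s\nu(y)$ on the $C^2$ tubular neighborhood $\mathcal N$ that the appendix already constructs, the reduction of the admissible set $A_s$ to an ellipsoid after cancellation of the $R^2$ terms, and the final algebra $1-R\kappa_j=R(1/R-\kappa_j)$ all check out; the half-space case $\kappa_j\equiv 0$ confirms the constants.

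One technical overclaim worth correcting: you sandwich $A_s$ between ellipsoids of the form $\bigl\{\sum_j(1-R\kappa_j)(y'_j)^2<2Rs\pm Cs^{3/2}\bigr\}$ and state $|A_s|=|E_s|\bigl(1+O(\sqrt{s})\bigr)$. The $O(s^{3/2})$ perturbation would require a $C^3$ boundary so that the Taylor remainder of $h$ is $O(|y'|^3)$. Under the stated hypothesis, $\pa\Omega$ is only $C^2$ near $y_0$, so the remainder is merely $o(|y'|^2)$, and on the $O(\sqrt{s})$-sized set this gives an $o(s)$ perturbation of the defining inequality, not $O(s^{3/2})$. Since $\max_j\kappa_j(y_0)<1/R$ ensures all semi-axes of $E_s$ are comparable to $\sqrt{s}$, the $o(s)$ perturbation still yields $|A_s|=|E_s|\bigl(1+o(1)\bigr)$, which is all the limit requires; your closing sentence already anticipates this. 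A similar remark applies to the integrand: with $|y'|=O(\sqrt{s})$ on $A_s$ one in fact has $\prod_j(1-s\hat\kappa_j)\sqrt{1+|\nabla h|^2}=1+O(s)$, so the bound $1+O(\sqrt{s})$ you state is weaker than what holds but still sufficient.

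Finally, a small point to make fully explicit: the representation $\mathcal H^{N-1}(\Gamma_s\cap B_R(x_0))=\int_{A_s}\prod_j(1-s\hat\kappa_j)\sqrt{1+|\nabla h|^2}\,dy'$ requires knowing that \emph{every} point of $\Gamma_s\cap B_R(x_0)$ lies in the image of the local graph chart under the inward tube map. Your localization step gives this once one also notes that the unique foot point of $x\in\Gamma_s\cap B_R(x_0)$ lies within distance $s+|x-y_0|$ of $y_0$, hence itself tends to $y_0$; this is implicit in your argument but deserves a sentence.
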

Since $\displaystyle{ \max_{1\le j \le N-1} \kappa_j(y_0) < \frac 1R}$, we can use this lemma.
 
Lemma \ref{lm:subsupersolutions-for-heat} implies that for every $t \in (0,t_\varepsilon]$
\begin{equation}
\label{estimates from both sides}
t^{-\frac{N+1}4 }\int_{B_R(x_0)} w_- \ dx \le t^{-\frac{N+1}4 } \int_{B_R(x_0)} u \ dx \le t^{-\frac{N+1}4 }\int_{B_R(x_0)} w_+ \ dx.
\end{equation}
Also, with the aid of the co-area formula, we have:
\begin{equation}
\label{apply co-area formula}
\int_{B_R(x_0)} v_\pm\ dx = t^{\frac{N+1}4}\int_0^{2Rt^{-\frac 12}} F_\pm(\xi)\xi^{\frac {N-1}2} \left(t^{\frac 12}\xi\right)^{-\frac {N-1}2} \mathcal H^{N-1}\left(\Gamma_{t^{\frac12}\xi}\cap B_R(x_0)\right) d\xi,
\end{equation}
where $v_\pm$ is defined by \eqref{def-of-pre-subsupersolutions}.

First, we take care of problem  \eqref{diffusion}-\eqref{initial}.
By Lebesgue's dominated convergence theorem, Lemma \ref{lm:asympvol} and \eqref{apply co-area formula},  we get
\begin{equation}
\label{limit as t to 0}
\lim_{t\to 0^+} t^{-\frac{N+1}4}\int_{B_R(x_0)} w_\pm\ dx = 2^{\frac{N-1}2}\omega_{N-1}\left\{\prod_{j=1}^{N-1}\left(\frac 1R-\kappa_j(y_0)\right)\right\}^{-\frac 12}\int_0^\infty 2 F_\pm(\xi) \xi^{\frac{N-1}2} d\xi.
\end{equation}
Moreover, again by Lebesgue's dominated convergence theorem, we see  that
\begin{equation}
\label{limit as epsion to 0}
\lim_{\varepsilon \to 0}\int_0^\infty 2F_\pm(\xi) \xi^{\frac{N-1}2} d\xi = \int_0^\infty 2F(\xi) \xi^{\frac{N-1}2} d\xi.
\end{equation}
Therefore, since $\varepsilon > 0$ is arbitrarily small in \eqref{estimates from both sides},  it follows that \eqref{asymptotics and curvatures} holds true, where we set
\begin{equation}
\label{coefficient}
c(\phi,N) = 2^{\frac{N-1}2}\omega_{N-1}\int_0^\infty 2F(\xi) \xi^{\frac{N-1}2} d\xi.
\end{equation}

In the case of problem \eqref{cauchy}, the proof runs similarly by replacing $2F_\pm,\ 2F$ with $(1\pm \varepsilon) F_\pm,\ F$, respectively,  in \eqref{limit as t to 0}-\eqref{coefficient}.

\vskip 4ex
\bigskip
\noindent{\large\bf Acknowledgement.}
\smallskip
The main part of the present paper was written during a  stay of the second author at the Department
of Mathematics of the University of Florence. He gratefully acknowledges
its hospitality. The authors would also like to thank the referees for their many valuable suggestions to improve clarity in several points.


\begin{thebibliography}{MS3}

\bibitem[A1]{A1} G.~Alessandrini,
Matzoh ball soup: a symmetry result for the heat equation,
J. Analyse Math. 54 (1990), 229--236.

\bibitem[A2]{A2} G.~Alessandrini, Characterizing spheres
by functional relations
on solutions of elliptic and parabolic equations, Applicable Anal. 40
(1991), 251--261.



\bibitem[Fr]{Fr}L.~E.~Fraenkel, An Introduction to Maximum Principles and Symmetry in Elliptic Problems, Cambridge University Press, Cambridge, 2000.

\bibitem[Kl]{Kl} M.S.~Klamkin, A physical characterization of a
sphere. (Problem 64-5$^*$) SIAM Review 6 (1964), 61.


\bibitem[MS1]{MSannals} R.~Magnanini and S.~Sakaguchi,  Matzoh ball soup: Heat conductors with a stationary isothermic surface, Ann. of Math. 156 (2002), 931--946.

\bibitem[MS2]{MSprsea} R.~Magnanini and S.~Sakaguchi, Interaction between degenerate diffusion and shape of domain,  Proceedings Royal Soc. Edinburgh, Section A, 137 (2007), 373--388.


\bibitem[MS3]{MSindiana} R.~Magnanini and S.~Sakaguchi,  Stationary isothermic surfaces for unbounded domains,  Indiana Univ.  Math. J. 56 (2007), 2723--2738.

\bibitem[MS4]{MSpoincare} R.~Magnanini and S.~Sakaguchi, 
Nonlinear diffusion with a bounded
stationary  level surface, Ann. Inst. Henri Poincar\'e - (C) Anal. Non Lin\'eaire 27 (2010), 937--952.

\bibitem[MS5]{MSjde2nd} R.~Magnanini and S.~Sakaguchi, 
Interaction between nonlinear diffusion
and geometry of domain, arXiv:1009.6131v1, preprint.

\bibitem[Va]{Va} S.~R.~S.~Varadhan, On the behavior
of the fundamental solution of the heat equation with
variable coefficients, Comm. Pure Appl. Math. 20 (1967), 431--455.

\bibitem[Z]{Z} L.~Zalcman, Some inverse problems of potential theory,
Contemp. Math. 63 (1987), 337--350.

\end{thebibliography}
\end{document}